\documentclass[10pt,reqno]{amsart}
\usepackage{amsmath,amssymb,mathrsfs,stackengine}
\usepackage{xcolor}
\colorlet{mdtRed}{red!50!black}
\definecolor{dblue}{rgb}{0,0,.6}
\usepackage[colorlinks]{hyperref}
\hypersetup{colorlinks,linkcolor={red},citecolor={blue},urlcolor={red}}
\usepackage[all]{xy}
\usepackage{tikz,tikz-cd,tkz-graph,enumerate}

\DeclareMathOperator{\Pic}{\textnormal{Pic}}
\DeclareMathOperator{\Br}{\textnormal{Br}}

\DeclareMathOperator{\Malpha}{\mc{M}^{\boldsymbol{m,\alpha}}_X(r,\ \xi)}
\DeclareMathOperator{\Nalpha}{\mc{N}^{\boldsymbol{m,\alpha}}_X(r,\delta)}

\DeclareMathOperator{\MLalpha}{\left(\mc{M}^{\boldsymbol{m,\alpha}}_X(r,\ \xi)\right)^L}

\DeclareMathOperator{\mgpar}{\mc{M}_G^{par}}

\newcommand{\mf}[1]{\mathfrak{#1}}
\newcommand{\mc}[1]{\mathcal{#1}}
\newcommand{\bb}[1]{\mathbb{#1}}

\renewcommand{\ker}{\mathrm{Ker}}

\newtheorem{theorem}{Theorem}[section]
\newtheorem{lemma}[theorem]{Lemma} 
\newtheorem{proposition}[theorem]{Proposition}
\newtheorem{corollary}[theorem]{Corollary}

\newtheoremstyle{myroman}
{3pt}   
{3pt}   
{}      
{}      
{\normalfont} 
{.}     
{0.5em} 
{}      

\theoremstyle{myroman}
\newtheorem{definition}[theorem]{Definition}
\newtheorem{remark}[theorem]{Remark}

\numberwithin{equation}{section} 

\begin{document}
	
	\baselineskip=15.5pt 
	
	\title[Brauer group of moduli stacks of parabolic principal bundles]{Brauer group of moduli stacks of parabolic principal bundles over a curve}
	
	\author[I. Biswas]{Indranil Biswas}
	
	\address{Mathematics Department, Shiv Nadar University, NH91, Tehsil Dadri,	Greater Noida, Uttar Pradesh 201314, India}
	
	\email{indranil.biswas@snu.edu.in, indranil29@gmail.com}
	
	\author[S. Chakraborty]{Sujoy Chakraborty}
	
	\address{Department of Mathematics, 
		Indian Institute of Science Education and Research Tirupati, Andhra Pradesh 517507, India}
	\email{sujoy.cmi@gmail.com}

	\subjclass[2010]{14D20, 14D22, 14D23, 14F22, 14H60}
	
	\keywords{Brauer group; moduli space; moduli stack; parabolic principal bundle.} 	
	
	\begin{abstract}
		We prove that the Brauer group of the moduli stack of parabolic stable
		principal $\textnormal{PGL}(r,\bb{C})$-bundles on a 
		curve $X$, for a generic system of weights along an arbitrary parabolic divisor, coincides with the Brauer group 
		of the smooth locus of the corresponding coarse moduli space of parabolic stable principal
		$\textnormal{PGL}(r,\bb{C})$-bundles. We also show that for any simple and simply connected complex linear 
		algebraic group $G$, the analytic and algebraic Brauer groups of the moduli stack of quasi-parabolic principal 
		$G$-bundles on $X$ vanish.
	\end{abstract}
	\maketitle
	
	\section{Introduction}
	
	The cohomological Brauer group of a quasi-projective variety $Y$ over $\bb{C}$, denoted by $\Br(Y)$, is 
	defined to be the torsion part $H^2_{\text{\'et}}(Y,\,\bb{G}_m)_{\rm torsion}$. When $Y$ is smooth, it is known that 
	$H^2_{\text{\'et}}(Y,\,\bb{G}_m)$ is already torsion. Brauer groups are interesting invariants to study for a 
	number of reasons. It is a stable birational invariant for smooth projective varieties, 
	making it very useful in studying rationality questions. In fact, Brauer group has been used in constructing 
	examples of non-rational varieties by many, including Colliot-Th\'el$\grave{e}$ne, Saltman, Peyre and 
	others. It also plays a central role in Brauer-Manin obstruction theory, which deals with the study of 
	rational points on varieties defined over a number field.
	
	The study of Brauer groups in the context of moduli of parabolic vector bundles over curves has been carried 
	out in recent times by several authors \cite{BB23,BCD23,BCD24,BD11}. The computation of the Brauer group of 
	the moduli space of stable parabolic bundles for structure groups $\text{SL}(r,\bb{C})$, 
	$\textnormal{PGL}(r,\bb{C})$ and $\textnormal{Sp}(r,\bb{C})$ have been carried out earlier in \cite{BD11, 
		BCD23, BCD24}. Our aim here is to generalize an earlier result of \cite{BCD23} for the
	case of $\text{PGL}(r,\bb{C})$-bundles, as well as study the general case for any simple and simply
	connected linear algebraic group $G$ over $\bb{C}$.
	
	More precisely, suppose $X$ is a smooth projective curve of genus $g$ defined over $\bb{C}$, with $g
	\,\geq\, 2$. Fix a line 
	bundle $\xi$ on $X$, and systems of multiplicities and weights $(\boldsymbol{m,\,\alpha})$ along a set of 
	parabolic points on $X$ (see Section \ref{section:preliminaries} for details). Let 
	$\mf{N}_X^{\boldsymbol{m,\alpha}}(r,\delta)$ denote the moduli stack of parabolic stable principal
	$\text{PGL}(r,\bb{C})$-bundles on $X$ of topological type $\delta \ \equiv\ \deg(\xi)\ \,(\text{mod}\ r)$ (see 
	Section \ref{section:brauer-group-moduli-stack}), and let $\Nalpha$ denote the corresponding coarse moduli space 
	of stable parabolic principal $\textnormal{PGL}(r,\bb{C})$-bundles. Our first main result is the generalization of 
	\cite[Theorem 1.1 (1)]{BCD23} for \textit{any} generic system of weights (earlier it was shown only for 
	\textit{full-flag} systems of weights).
	
	\begin{theorem}[{Theorem \ref{thm:brauer-group-of-moduli-stack}}]
		Let $\boldsymbol{\alpha}$ be a generic system of weights. Let $\Nalpha^{sm}$ denote the smooth locus of $\Nalpha$. Then,
		$$\Br\left(\mf{N}^{\boldsymbol{m,\alpha}}_X(r,\delta)\right)\,\ \simeq\,\
		\Br\left(\Nalpha^{sm}\right).$$
	\end{theorem}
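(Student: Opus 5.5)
The plan is to compare the stack $\mf{N}:=\mf{N}^{\boldsymbol{m,\alpha}}_X(r,\delta)$ with its coarse space through the moduli of parabolic stable vector bundles with fixed determinant. Let $\mc{M}:=\Malpha$ be the (smooth, since $\boldsymbol{\alpha}$ is generic) coarse moduli space of stable parabolic bundles of rank $r$ and determinant $\xi$. Let $\Gamma:=\Pic^0(X)[r]$ be the group of $r$-torsion line bundles, acting on $\mc{M}$ by $E\mapsto E\otimes L$.

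Step 1 is a quotient-stack description. Every stable parabolic $\text{PGL}(r,\bb{C})$-bundle of type $\delta$ lifts to a parabolic vector bundle of determinant $\xi$, and two lifts differ by tensoring with an element of $\Gamma$. Moreover, the automorphisms of a parabolic stable $\text{PGL}$-bundle are exactly the stabilizers of this $\Gamma$-action, since parabolic stable bundles are simple. From this I will deduce an isomorphism of stacks
$$\mf{N}\,\simeq\,\big[\,\mc{M}/\Gamma\,\big].$$
To prove it, I would use the moduli stack of parabolic stable vector bundles with determinant $\xi$, which is a $\mu_r$-gerbe over $\mc{M}$. Its quotient by $\mathrm{Pic}(X)$-twists recovers $\mf{N}$, and the $\mu_r$-gerbe structure is absorbed when one passes to $\text{PGL}$. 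The consequence is that $\mf{N}$ is a smooth Deligne--Mumford stack whose coarse space $\mc{M}/\Gamma=\Nalpha$ has quotient singularities.

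Step 2 is to locate the stacky locus. Let $Z\subset \mc{M}$ be the union of the fixed loci $\mc{M}^L$ for $L\in\Gamma\setminus\{0\}$, and let $U:=\mc{M}\setminus Z$. Then $\Gamma$ acts freely on $U$, and the open substack $[U/\Gamma]$ equals the scheme $U/\Gamma$. I will show, or cite from \cite{BCD23} with the argument extended to arbitrary generic weights, two facts:
\begin{itemize}
\item $\mathrm{codim}(Z,\mc{M})\geq 2$;
\item $U/\Gamma$ is exactly the smooth locus $\Nalpha^{sm}$.
\end{itemize}
The codimension bound would come from the dimension count for $\mc{M}^L$. Via the spectral cover $Y\to X$ attached to $L$, the locus $\mc{M}^L$ is described by parabolic bundles of rank $r/\mathrm{ord}(L)$ on $Y$. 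Comparing dimensions with $g\ge 2$ gives a large codimension, and the parabolic contributions should only widen the gap. For the smooth-locus equality, I need that points of $Z$ map to genuinely singular points. Since $\Gamma$ contains no pseudo-reflections once the codimension is at least $2$, the Chevalley--Shephard--Todd theorem gives that the image of a point with nontrivial stabilizer is singular. Hence $\Nalpha^{sm}=U/\Gamma$.

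Step 3 is the Brauer comparison. The stack $\mf{N}$ is smooth, and the closed substack $[Z/\Gamma]$ has codimension $\ge 2$. By purity for the Brauer group of regular Noetherian Deligne--Mumford stacks (Gabber's purity, applied through the smooth atlas $\mc{M}\to\mf{N}$ together with a descent spectral sequence), restriction gives an isomorphism
$$\Br(\mf{N})\xrightarrow{\ \sim\ }\Br([U/\Gamma])=\Br(U/\Gamma)=\Br(\Nalpha^{sm}).$$

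The main obstacle I expect is Step 2 for arbitrary generic weights (rather than full flags). The codimension estimate for $\mc{M}^L$ must handle partial flags, where the parabolic data on the spectral cover are more intricate. I would first try to bound $\dim\mc{M}^L$ by the dimension of the corresponding moduli on the étale cover $Y$, computing its parabolic contribution pointwise over the preimages of the parabolic points. A secondary point of care is making the stack isomorphism in Step 1 precise.
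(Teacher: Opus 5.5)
Your proposal is correct and shares the paper's skeleton:
\begin{itemize}
\item the identification $\mf{N}^{\boldsymbol{m,\alpha}}_X(r,\delta)=[\mc{M}/\Gamma]$;
\item a codimension bound for the union $Z$ of fixed loci, obtained from the spectral cover;
\item purity for the Brauer group of the stack;
\item freeness of the action on $U=\mc{M}\setminus Z$.
\end{itemize}
Your Hochschild--Serre derivation of stack purity from purity on $\mc{M}$ simply replaces the paper's citation of \cite[Proposition 4.2]{BCD23}.

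The genuine difference is the last step. The paper never identifies $\Nalpha^{sm}$. It only observes that $U/\Gamma$ is a smooth open subscheme of $\Nalpha^{sm}$ whose complement has high codimension, and applies purity \cite{C19} a second time on the regular scheme $\Nalpha^{sm}$. You instead prove $U/\Gamma=\Nalpha^{sm}$ exactly, by linearizing at points with nontrivial stabilizer and invoking Chevalley--Shephard--Todd. This gives more information (the singular locus is precisely $Z/\Gamma$) and uses purity only once. The price is a local argument. You need that for $L\neq\mc{O}_X$ the fixed locus is smooth at $x$ with tangent space $(T_x\mc{M})^L$. You also need its codimension to be $\geq 2$ at every point, which follows from irreducibility of $\mc{M}$. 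Together these rule out both pseudo-reflections and nontrivial elements acting trivially on $T_x\mc{M}$.

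Your target codimension $\geq 2$ is the right threshold: it is all that either purity statement requires, and it is what the dimension count gives uniformly. The paper states $\geq 3$, but the bound $r^2(g-1)(1-1/d)$ it actually derives is only $2$ when $r=d=g=2$; this is harmless for the theorem.

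For the step you flag as the main obstacle, the paper argues exactly as you suggest (Proposition \ref{prop:parabolic-fixed-point} and Corollary \ref{cor:codimension-estimate}). The isomorphism $E_*\simeq E_*\otimes L$ preserves the flags, so the induced diagonalizable endomorphism of $E_p=\bigoplus_j F_{q_j}$ preserves them. Each flag therefore splits along the eigenspaces, which gives a parabolic structure on $F$ with multiplicities $n_{q_j,k}$ satisfying $\sum_j n_{q_j,k}=m_{p,k}$. The inequality $\sum_{i<k}m_{p,i}m_{p,k}\geq\sum_j\sum_{i<k}n_{q_j,i}n_{q_j,k}$ then shows that the parabolic terms only widen the gap.
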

	
	Let $G$ be a simple and simply connected linear algebraic group
	defined over $\bb{C}$. Fix a sequence $\underline{p}\,=\, (p_1,\, p_2 ,\, \cdots,\, p_n)$ of points on $X$, as well
	as a sequence $\underline{P}\,=\,(P_1,\, P_2,\, \cdots,\, P_n)$ of parabolic subgroups of $G$. Let $\mgpar\left(\underline{p},\ \underline{P}\right)$ denote the moduli stack of quasi-parabolic principal $G$-bundles on $X$ of type $\underline{P}$ (see \cite{LS97} for details). 
	
	\begin{theorem}[{Proposition \ref{prop:analytic-brauer-group-vanishing} and Theorem
			\ref{thm:brauer-group-vanishing}}]
		Both the analytic and the algebraic Brauer group of $\mgpar\left(\underline{p}, \ \underline{P}\right)$ vanish.
		In other words, if $\mc{O}^{^{an}}_{\mgpar\left(\underline{p}, \ \underline{P}\right)}$ is
		the sheaf of analytic functions on $\mgpar\left(\underline{p}, \ \underline{P}\right)$, then
		$$H^2\left(\mgpar,\ (\mc{O}^{^{an}}_{\mgpar})^*\right)_{torsion}\ =\ 0\ =\
		\Br\left(\mgpar\left(\underline{p}, \ \underline{P}\right)\right).$$
	\end{theorem}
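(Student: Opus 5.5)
The strategy is to show that the stack is topologically highly connected and has no torsion in low degree, and then pass to Brauer groups through the exponential and Kummer sequences. Write $\mc{M}\,=\,\mgpar(\underline{p},\,\underline{P})$. By \cite{LS97}, $\mc{M}$ is the quotient stack $[\mc{Q}/\mc{G}]$. Here $\mc{Q}\,=\,\mc{Q}_G\times\prod_i G/P_i$ is the product of the affine Grassmannian (a uniformization is chosen at some point) with the flag varieties. The group $\mc{G}\,=\,G(\bb{C}[X\setminus\{q\}])$ is a group of algebraic loops, and it is ind-affine and connected. Since $G$ is simply connected, $\mc{Q}_G$ and each $G/P_i$ are simply connected. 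Each has a cellular (Schubert) decomposition with only even-dimensional cells, so their integral cohomology is torsion-free and concentrated in even degrees.

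\emph{Step 1 (topology).} I will compute the low-degree cohomology of $\mc{M}$ using the Leray spectral sequence of $\mc{Q}\to[\mc{Q}/\mc{G}]$, equivalently the Borel construction $\mc{Q}\times_{\mc{G}}E\mc{G}$. The key inputs are the following facts about $\mc{G}$ from \cite{LS97} and Teleman: it is connected, $\pi_1(\mc{G})$ is trivial or known, and $H^1(\mc{G},\bb{Z})\,=\,0$. I would rather argue directly. The classical computation gives that the stack $\mc{M}_G$ of $G$-bundles is $2$-connected: $\pi_2(\mc{M}_G)\,=\,\bb{Z}$, $H^1\,=\,0$, $H^2(\mc{M}_G,\bb{Z})\,=\,\bb{Z}$ and $H^3(\mc{M}_G,\bb{Z})\,=\,0$, as in Atiyah--Bott. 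Next, $\mc{M}\to\mc{M}_G$ is a fibration with fibre $\prod_i G/P_i$, which is simply connected with $H^{odd}\,=\,0$. The Serre spectral sequence then shows that $H^1(\mc{M},\bb{Z})\,=\,0$, that $H^2(\mc{M},\bb{Z})$ is free, and that $H^3(\mc{M},\bb{Z})\,=\,0$. Indeed, no odd rows or columns contribute in total degree $\le 3$, because $H^1$ and $H^3$ vanish on both base and fibre and all groups involved are torsion-free.

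\emph{Step 2 (analytic Brauer group).} I will apply the exponential sequence on $\mc{M}^{an}$. This gives $H^2(\mc{M},\mc{O})\to H^2(\mc{M},\mc{O}^*)\to H^3(\mc{M},\bb{Z})\,=\,0$. On the other hand, $H^i(\mc{M},\mc{O})\,=\,0$ for $i>0$: the coherent cohomology of $\mc{O}$ on $\mc{M}_G$ vanishes by Teleman's Borel--Weil--Bott theorem, and it extends to the flag bundle via $R\pi_*\mc{O}\,=\,\mc{O}$. Hence $H^2(\mc{M},\mc{O}^*)\,=\,0$, and in particular its torsion part vanishes.

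\emph{Step 3 (algebraic Brauer group).} The Kummer sequence gives
$$0\to\Pic(\mc{M})/n\to H^2_{\text{\'et}}(\mc{M},\mu_n)\to \Br(\mc{M})[n]\to 0 .$$
By comparison, $H^2_{\text{\'et}}(\mc{M},\mu_n)\,=\,H^2(\mc{M},\bb{Z}/n)\,=\,H^2(\mc{M},\bb{Z})/n$, using freeness of $H^2$ and $H^3$ and the vanishing of $H^1$ and the torsion. Next, $\Pic(\mc{M})\cong H^2(\mc{M},\bb{Z})$ via $c_1$. This follows from the analytic statement $H^1(\mc{O})\,=\,H^2(\mc{O})\,=\,0$ together with the known algebraic Picard group $\Pic(\mc{M})\,=\,\bb{Z}\oplus\bigoplus_i X^*(P_i)$ of \cite{LS97}, matched with $H^2$ from Step 1. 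Therefore the first map is an isomorphism, and $\Br(\mc{M})[n]\,=\,0$ for every $n$.

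The main obstacle is the comparison between étale and analytic cohomology for the non-finite-type (but locally finite type, smooth) algebraic stack $\mc{M}$. To handle it, I would first exhaust $\mc{M}$ by open substacks of finite type, namely those given by bounded Harder--Narasimhan strata, whose complements have high codimension. This reduces to the finite type case, where Artin comparison holds via smooth atlases. The Brauer group and $H^{\le 3}$ are unchanged by removing a closed substack of codimension at least $2$, resp. $3$.
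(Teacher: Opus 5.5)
There is a genuine gap in the topological input on which Steps~2 and~3 both rest. The claim $H^3(\mc{M}_G,\bb{Z})=0$ ``as in Atiyah--Bott'' is false, and ``$2$-connected with $\pi_2(\mc{M}_G)=\bb{Z}$'' is self-contradictory. In fact $\mc{M}_G$ is simply connected with $\pi_2(\mc{M}_G)\cong\pi_3(G)\cong\bb{Z}$. The K\"unneth components along $H^1(X)$ of the degree-$4$ characteristic class of the universal bundle on $X\times\mc{M}_G$ give $H^3(\mc{M}_G,\bb{Z})\cong\bb{Z}^{2g}$. For $G=\mathrm{SL}(n,\bb{C})$ you can see this in the Atiyah--Bott Poincar\'e series, whose lowest factor is $(1+t^3)^{2g}/\bigl((1-t^2)(1-t^4)\bigr)$. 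Vanishing of odd cohomology holds for the affine Grassmannian $\mc{Q}_G$ (even cells), not for the stack.

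Hence your Serre spectral sequence for $\mc{M}=\mgpar\to\mc{M}_G$ does not behave as you say. Here $E_2^{3,0}=H^3(\mc{M}_G,\bb{Z})\neq 0$, and $H^3(\mc{M},\bb{Z})\cong\mathrm{coker}\bigl(d_3\colon H^2(F,\bb{Z})\to H^3(\mc{M}_G,\bb{Z})\bigr)$. You never control this transgression, and its cokernel could a priori have torsion. Torsion-freeness of $H^3(\mc{M},\bb{Z})$ is exactly what everything downstream needs:
\begin{itemize}
\item In Step~2, under your own vanishing $H^i(\mc{M},\mc{O})=0$ for $i>0$, the exponential sequence gives $H^2(\mc{M},\mc{O}^*)\cong H^3(\mc{M},\bb{Z})$, which is in fact $\bb{Z}^{2g}$, not zero. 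Only its torsion vanishes, and only because $H^3(\mc{M},\bb{Z})$ is torsion-free.
\item In Step~3, the identification $H^2(\mc{M},\bb{Z}/n)=H^2(\mc{M},\bb{Z})/n$ likewise requires $H^3(\mc{M},\bb{Z})[n]=0$.
\end{itemize}

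Two ingredients are missing.
\begin{itemize}
\item $H^3(\mc{M}_G,\bb{Z})$ is torsion-free. Since $\mc{M}_G$ is simply connected, $H^3(\mc{M}_G,\bb{Z})_{torsion}\cong\mathrm{Ext}(H_2(\mc{M}_G,\bb{Z}),\bb{Z})$, and $H_2\cong\pi_2\cong\bb{Z}$ by Hurewicz.
\item $d_3$ vanishes on $E_3^{0,2}=H^2(F,\bb{Z})$. For each character $\chi$ of $P_i$, restrict the universal $G$-bundle to $p_i$ and use its tautological reduction to $P_i$. This yields a line bundle on $\mc{M}$ whose restriction to the fibre factor $G/P_i$ is the homogeneous line bundle attached to $\chi$. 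Since $\Pic(G/P_i)\to H^2(G/P_i,\bb{Z})$ is an isomorphism, every class in $H^2(F,\bb{Z})$ is the restriction of a global class, hence a permanent cycle.
\end{itemize}
The second point is precisely the Leray--Hirsch input behind the paper's decomposition $H_B^*(\mgpar,\bb{Z})\simeq H_B^*(\mc{M}_G,\bb{Z})\otimes H_B^*(F,\bb{Z})$. Together they give $H^3(\mc{M},\bb{Z})\cong H^3(\mc{M}_G,\bb{Z})$, torsion-free.

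You should also make surjectivity of $c_1\colon\Pic(\mc{M})\to H^2(\mc{M},\bb{Z})$ explicit rather than inferring it from equal ranks, since a finite-index image is not excluded that way. You need that $c_1$ of the generator of $\Pic(\mc{M}_G)\cong\bb{Z}$ generates $H^2(\mc{M}_G,\bb{Z})$ (Teleman), together with the line bundles just described.

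With these repairs, your Step~3 is a valid and somewhat more direct alternative to the paper's verification of the two hypotheses of \cite[Proposition 2.1]{BH13}. It uses the Kummer sequence, Artin comparison on finite-type opens, and surjectivity of $c_1$, and it does not need Step~2 for the algebraic statement.
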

	
	\section{Preliminaries on parabolic vector bundles}\label{section:preliminaries}
	
	Let $X$ be a connected smooth projective curve, defined over $\bb{C}$, of genus $g$, with $g\,\geq\, 2$. Fix a 
	finite subset $S\,\subset\, X$, which will be referred to as the \textit{parabolic points}. The set 
	$S$ shall remain fixed throughout.
	
	\begin{definition}\label{def:parabolic-bundles}
		A \textit{parabolic vector bundle} of rank $r$ on $X$ is an algebraic vector bundle $E$
		on $X$ of rank $r$ together with a weighted flag on the fiber $E_p$ of $E$ over every $p\,\in\, S$:
		\begin{align}\label{eqn:filtration-data}
			E_{_p} \,=\, E_{_{p,1}}\,&\supsetneq\, E_{_{p,2}}\,\supsetneq\, \cdots \,\supsetneq\, E_{_{p,\ell(p)}}
			\,\supsetneq\, E_{_{p,\ell(p)+1}} \,=\,0\\
			0\,\leq\, &\alpha_{_{p,1}}\,<\,\alpha_{_{p,2}} \,<\,\cdots\,<\,\alpha_{_{p,\ell(p)}}\,<\,\alpha_{_{p,\ell(p)+1}}=1,\nonumber
		\end{align}
		where $\alpha_{_{p,i}}\, \in\, \mathbb R$. 
		\begin{enumerate}[$\bullet$]
			\item Such a flag is said to be of length $\ell(p)$, and the numbers $m_{_{p,i}} \,:=\, \dim E_{_{p,i}} -
			\dim E_{_{p,i+1}}$ are called the \textit{multiplicities} of the flag at $p$. More precisely,
			$m_{_{p,i}}$ is the multiplicity associated to the weight $\alpha_{_{p,i}}$.
			
			\item The flag at $p$ is said to be \textit{full} if $m_{_{p,i}} \,=\,1$ for every $i$, in 
			which case clearly we have $\ell(p) \,=\, r$.
			
			\item The collection of real numbers $\boldsymbol{\alpha}\,:=\,\{(\alpha_{_{p,1}}
			\,<\,\alpha_{_{p,2}}\,<\,\cdots\,<\,\alpha_{_{p,\ell(p)}})\}_{p\in S}$ is called a system
			of weights, and the collection of integers $\boldsymbol{m}\ :=\ \{(m_{_{p,1}},\ m_{_{p,2}},\,\cdots,\,m_{_{p,\ell(p)}})\}_{p\in S}$ is called a system	of multiplicities.
			
			\item We will often denote a system of multiplicities (respectively, a system of weights) by
			$\boldsymbol{m}$ (respectively, $\boldsymbol{\alpha}$), when there is no scope of any confusion. Also,
			a parabolic vector bundle of the above type will also be denoted by $E_*$.
		\end{enumerate}
		
		If we ignore the weights and only consider the data of a vector bundle $E$ together with a given
		filtration on the fibers $\{E_p\}_{p\in S}$, then such an object will be called a
		\textit{quasi-parabolic vector bundle}. Thus, a quasi-parabolic vector bundle has an associated system
		of multiplicities, but no associated system of weights. In Section \ref{section:brauer-group-quasi-parabolic-bundles}
		quasi-parabolic bundles will appear in a more general context.
		
	\end{definition}
	\begin{definition}\label{def:parabolic-morphism}
		Let $E_*$ and $F_*$ be two parabolic vector bundles with systems of multiplicities and
		weights $(\boldsymbol{m,\,\alpha})$ and $(\boldsymbol{m',\,\alpha'})$
		respectively. A \textit{parabolic morphism} $f_* \ :\ E_* \ \longrightarrow\ F_*$
		is an ${\mathcal O}_X$--linear homomorphism $f \,:\, E\,\longrightarrow\, F$ between
		the underlying vector bundles such that at each parabolic point $p$, we have $f_{_{p}}(E^i_{_{p}})
		\,\subset\, {F}^{j+1}_{_{p}}$ whenever $\alpha^i_{_{p}}\,\geq\,{\alpha'}^j_{_{p}}$.
	\end{definition}
	
	\begin{definition}\label{def:parabolic-degree-and-slope}
		Consider a parabolic vector bundle $E_*$ of rank $r$ as in Definition \ref{def:parabolic-bundles}. The \textit{parabolic degree} of $E_*$ is defined to be the real number
		\begin{equation}
			\text{par}\deg(E_*)\ :=\ \deg(E)+\sum_{p\in S}\sum_{i=1}^{\ell(p)} m_{_{p,i}}\alpha_{_{p,i}}. 
		\end{equation}
		The \textit{parabolic slope} of $E_*$ is defined to be the real number
		\begin{equation}
			\text{par}\mu(E_*)\ :=\ \frac{\text{par}\deg(E_*)}{r}.
		\end{equation}
		
		A parabolic vector bundle $E_*$ is called \textit{parabolic semistable} (respectively,
		\textit{parabolic stable}) if for every proper sub-bundle $0\, \not=\, F\, \subset\, E$ we have 
		\[
		\textnormal{par}\mu(F_*)\,\leq\, \textnormal{par}\mu(E_*)\ \ \, (\text{respectively, }\
		\textnormal{par}\mu(F_*)\, <\, \textnormal{par}\mu(E_*)),\]
		where $F_*$ denotes the parabolic bundle defined by the parabolic
		structure on $F$ induced by the parabolic structure of $E_*$ (see \cite{MS80} for
		the details).
	\end{definition}
	
	Of course, every parabolic stable vector bundle is parabolic semistable, but the converse is not true in 
	general. Systems of weights for which parabolic semistability and parabolic stability coincide are called 
	\textit{generic} (see \cite{BY99} for more details).
	
	\subsection{Parabolic push-forward and pull-back}\label{parabolic pushforward}
	
	Let $Y$ be another irreducible smooth projective complex curve and $\gamma\,:\,  Y\,\longrightarrow\,
	X$ a finite \'etale Galois covering morphism. If $F$ is a vector bundle on $Y$ of rank $n$, then
	the direct image $\gamma_*F$ is a vector bundle on $X$ of rank $nd$, where $d$ is the 
	degree of the map $\gamma$. Given a parabolic structure on $F$ along parabolic points $S'\,\subset
	\,Y$, there is a natural way to construct a parabolic structure on $\gamma_* F$ along
	$\gamma(S')\,\subset\, X$ \cite[\S~3]{BM19}. Below, this construction will be briefly recalled.
	
	Let $S\,\subset\, X$ be a finite set of points, and suppose $F\, \longrightarrow\, Y$ is a vector bundle
	equipped with a parabolic structure
	over $\gamma^{-1}(S)\,\subset\, Y$. For convenience, consider the special case where $S\,=
	\,\{p\}\,\subset\, X$ is a singleton. Let $d\,=\,\deg(\gamma)$. Since $\gamma$ is unramified, the inverse
	image $\gamma^{-1}(p)$ consists of $d$ distinct points of $Y$. Suppose the parabolic structure of
	$F_*$ over $q\,\in\, \gamma^{-1}(p)$ is as follows:
	the parabolic data looks like
	\begin{align}\label{eqn:parabolic-structure-upstairs}
		F_q \,=\, F_{_{q,1}}\,\supsetneq\, F_{_{q,2}}\,\supsetneq \,\cdots\,\supsetneq\, F_{_{q,\ell(q)}}
		\,\supsetneq\, F_{_{q,\ell(q)+1}}\,=\,0,\\
		0\, \leq\,  \alpha_{_{q,1}}\,<\,\alpha_{_{q,2}}\,<\,\cdots<\,\alpha_{_{q,\ell(q)}}\,<\,\alpha_{_{q,\ell(q)+1}}\,=\,1.
	\end{align}
	We shall construct a parabolic structure on $E\,=\,\gamma_*F$ along the point $\{p\}$ from these data. Note that
	\[E_p\,=\,\bigoplus_{q\,\in\, \gamma^{-1}(p)}F_{q}.\]
	Define a positive integer $\ell(p)$ and real numbers $\{\beta_{_{p,1}},\,\beta_{_{p,2}},\,\cdots,\,
	\beta_{_{p,\ell(p)}}\}$ as follows: $$\{\beta_{_{p,1}},\,\beta_{_{p,2}},\,\cdots,\,\beta_{_{p,\ell(p)}}\} \ =\ \bigcup_{q\in\gamma^{-1}(p)}\{\alpha_{_{q,1}},\ \ \cdots,\ \alpha_{_{q,\ell(q)}}\}.$$
	Also assume that the $\beta_{_{p,i}}$'s form an increasing sequence
	$\left(\beta_{_{p,1}}\,<\,\beta_{_{p,2}}\,<\,\cdots\,<\,\beta_{_{p,\ell(p)}}\right)$. These will be
	the weights at $\{p\}$.
	
	For each $1\,\leq\, j\,\leq\, \ell(p)$, define a filtration of $E_p$ by the subspaces
	$$E_{_{p,j}} \,:=\, \bigoplus_{q\in\gamma^{-1}(p)}F_{_{q,t(q)}},$$
	where for each point $q\,\in\, \gamma^{-1}(S)$, the integer $t(q)\,\in\,[1,\,\ell(q)]$ (see
	\eqref{eqn:parabolic-structure-upstairs}) is the smallest one satisfying $\beta_{_{p,j}}\,\leq\, \alpha_{_{q,t(q)}}.$ Consequently,
	\begin{align*}
		E_{_{p,1}}\,\supsetneq\, E_{_{p,2}}\,\supsetneq\,\cdots\,\supsetneq\, E_{_{p,\ell(p)}}\ \supsetneq E_{_{p,\ell(p)+1}}=0\\
		0\, \leq\, \beta_{_{p,1}}\,<\,\beta_{_{p,2}}\,<\,\cdots\,<\,\beta_{_{p,\ell(p)}}\ < \ \beta_{_{p,\ell(p)+1}}=1
	\end{align*}
	is a parabolic structure on $E=\gamma_*(F)$ along the point $\{p\}$.
	
	More generally, if $S\,\subset\, X$ is a finite subset, and a parabolic vector bundle $F_*$ on $Y$ is given
	with a parabolic structure along $\gamma^{-1}(S)$, one can perform the above
	construction for each $p\,\in\, S$ to obtain a 
	parabolic push-forward $E_*\ =\ \gamma_* (F_*)$ on $X$ with parabolic structure along $S$.
	
	Let $\gamma\,:\,Y\,\longrightarrow\, X$ be as above. If $E_*$ is a parabolic vector bundle on $X$ with parabolic 
	structure along $S$, then $\gamma^*E$ has an induced parabolic structure along $\gamma^{-1}(S)$ by assigning, 
	for any $p\,\in\, S$ and $q\in \gamma^{-1}(p)$, the same filtration and weights on the fiber 
	$(\gamma^*E)_q\,=\, E_p$ as those on $E_p$.
	
	\section{Fixed points of moduli of parabolic $\text{SL}(r,\bb{C})$-bundles}\label{section:fixed-point-locus}
	
	Fix a positive integer $r$ and a line bundle $\xi$ on $X$. Also, fix a system of weights and multiplicities 
	$(\boldsymbol{m,\,\alpha})$ along parabolic points $S\,\subset\, X$. In this section, we shall assume that 
	$\boldsymbol{\alpha}$ is a \textit{generic} system of weights, so that a parabolic vector bundle is parabolic 
	stable if it is parabolic semistable. Let $$\Malpha$$ denote the moduli space of stable parabolic vector bundles 
	of rank $r$ on $X$ with determinant $\xi$ and systems of multiplicities and weights $(\boldsymbol{m,\,\alpha})$ 
	along $S$. Let
	$$
	\Gamma\ :=\ \{L\in \Pic(X)\ \ \big\vert\, \ L^{\otimes r}\,\simeq\, \mc{O}_X\}
	$$ denote the group of $r$-torsion line bundles on $X$. For any $E_*\, \in\,
	\Malpha$ and $L\, \in\, \Gamma$, it is clear that $E_*\otimes L\, \in\, \Malpha$.
	Thus, the group $\Gamma$ acts on $\Malpha$.
	
	If one additionally assumes that the system of multiplicities $\boldsymbol{m}$ is of \textit{full-flag} type, 
	i.e., $m_{_{p,i}}\,=\,1$ for all $1\,\leq\, i\,\leq\, \ell(p)$ and every $p\,\in \,S$, then it was shown in
	\cite{BCD23} that the 
	codimension of the fixed point locus of $\Malpha$ under any non-trivial $r$-torsion line bundle $L\in \Gamma$ is 
	at least $3$. This codimension estimation will be extended for \textit{any} generic system of 
	weights $\boldsymbol{m}$.
	
	\begin{lemma}\label{lem:eigenbasis}
		Let $V$ be a finite-dimensional vector space over a field $k$ with a diagonalizable endomorphism
		$\varphi\,:\, V\,\longrightarrow \,V$ and a filtration by subspaces
		$$V\ =\ V_1\ \supsetneq\ V_2\ \supsetneq \ \cdots\ \supsetneq V_\ell \ \supsetneq\ 0$$
		such that $\varphi(V_i) = V_i$ for all $1\, \leq\, i\, \leq\, \ell$. There exists a basis $\mc{B}_i$ of $V_i$
		consisting of eigenvectors for $\varphi$, satisfying $\mc{B}_i\,\supset\, \mc{B}_{i+1}$ for all
		$1\,\leq\, i\,\leq \,\ell$.
	\end{lemma}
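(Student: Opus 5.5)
We build the bases from the bottom of the filtration upward, by downward induction on $i$. The basic fact is that the restriction of a diagonalizable endomorphism to an invariant subspace is again diagonalizable. Indeed, if $W\subset V$ is $\varphi$-stable, then
$$W\ =\ \bigoplus_{\lambda} \bigl(W\cap V^{\lambda}\bigr),$$
where $V^{\lambda}$ denotes the $\lambda$-eigenspace of $\varphi$. To see this, write $w\in W$ as $w=\sum_\lambda v_\lambda$ with $v_\lambda\in V^\lambda$. Each component $v_\lambda$ equals $P_\lambda(\varphi)(w)$, where $P_\lambda$ is a Lagrange interpolation polynomial in the finitely many distinct eigenvalues, taking the value $1$ at $\lambda$ and $0$ at the others. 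Hence $v_\lambda\in W$. Only the hypothesis $\varphi(V_i)\subset V_i$ is used; the stronger equality in the statement is not needed.

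For the base case, set $\mc{B}_{\ell}$ to be the union over $\lambda$ of bases of the spaces $V_\ell\cap V^\lambda$. By the decomposition above this is an eigenbasis of $V_\ell$.

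For the inductive step, assume $\mc{B}_{i+1}$ is an eigenbasis of $V_{i+1}$ with $\mc{B}_{i+1}\supset \mc{B}_{i+2}\supset\cdots$. Partition it as $\mc{B}_{i+1}=\bigsqcup_\lambda \mc{B}_{i+1}^\lambda$ according to eigenvalue. Then $\mc{B}_{i+1}^\lambda$ is a linearly independent subset of $V_i\cap V^\lambda$. In fact it is a basis of $V_{i+1}\cap V^\lambda$: since $V_{i+1}=\bigoplus_\lambda(V_{i+1}\cap V^\lambda)$, each of these pieces is spanned by the corresponding part of the basis. For each $\lambda$, extend $\mc{B}_{i+1}^\lambda$ to a basis $\mc{B}_i^\lambda$ of $V_i\cap V^\lambda$, using $V_{i+1}\cap V^\lambda\subset V_i\cap V^\lambda$. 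Set $\mc{B}_i:=\bigcup_\lambda \mc{B}_i^\lambda$. By the decomposition of $V_i$, this is a basis of $V_i$; it consists of eigenvectors, and it contains $\mc{B}_{i+1}$. Continuing down to $i=1$ gives all the required bases.

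The only step with any content is the decomposition of an invariant subspace into its intersections with the eigenspaces. The rest is routine extension of bases within each eigenspace. The reason for working eigenspace by eigenspace is that an arbitrary eigenbasis of $V_{i+1}$ could not otherwise be extended compatibly to one of $V_i$.
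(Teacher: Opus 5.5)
Your proof is correct, and its skeleton (downward induction from $V_\ell$, enlarging an eigenbasis of $V_{i+1}$ to one of $V_i$) matches the paper's. The difference is in how the new vectors are produced.

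The paper uses semisimplicity of the $k[\varphi]$-module $V_i$ to get a $\varphi$-invariant complement $W$ with $V_i=W\oplus V_{i+1}$. It then adjoins an eigenbasis of $W$, using that a diagonalizable map restricted to an invariant subspace stays diagonalizable. You instead prove the one needed fact directly by Lagrange interpolation: an invariant subspace $W$ satisfies $W=\bigoplus_\lambda (W\cap V^\lambda)$. You then extend bases inside each inclusion $V_{i+1}\cap V^\lambda\subseteq V_i\cap V^\lambda$.

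The span of the vectors you add is itself a $\varphi$-invariant complement of $V_{i+1}$ in $V_i$. So your construction is an explicit, self-contained version of the paper's, while the paper's is shorter because it leaves the key fact to standard module theory. Both arguments use only $\varphi(V_i)\subseteq V_i$.

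One small correction to your last sentence: an arbitrary eigenbasis of $V_{i+1}$ can always be extended to an eigenbasis of $V_i$. Your own inductive step shows this, since any eigenbasis of $V_{i+1}$ splits into bases of the spaces $V_{i+1}\cap V^\lambda$; the paper extends an arbitrary one via $W$. The real point is that a careless extension to a basis of $V_i$ need not consist of eigenvectors.
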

	
	\begin{proof}
		By standard theory of semisimple modules,  $\varphi|_{V_i}$ is diagonalizable on $V_i$ for each $i$, so
		there exists a basis $\mc{B}_{i}$ of $V_{i}$ consisting of eigenvectors. Moreover, one can write
		$V_{i-1}\,=\,W_{i-1}\oplus V_{i}$ for some subspace $W_{i-1}$ which is invariant under $\varphi$. Thus,
		one can start with a basis $\mc{B}_{\ell}$ of $V_{\ell}$ consisting of eigenvectors, next choose a direct
		summand $W_{\ell-1}$ of $V_{\ell}$ in $V_{\ell-1}$, namely $V_{\ell-1} \,=\, W_{\ell-1}\oplus V_{\ell}$, and choose an
		eigenbasis of $W_{\ell-1}$;  its union with $\mc{B}_{\ell}$ produces a basis $\mc{B}_{\ell-1}$ for $V_{\ell-1}$
		consisting of eigenvectors and satisfying $\mc{B}_{\ell-1}\,\supset\,\mc{B}_{\ell}$. Proceeding inductively, one
		can produce bases $\mc{B}_i$ of $V_i$. This proves the lemma.
	\end{proof}
	
	For any $L\,\in \,\Gamma\setminus\{\mathcal{O}_X\}$, let
	\begin{equation*}
		\MLalpha\,\ \subset\,\ \Malpha
	\end{equation*}
	be the locus of fixed points for the action of $L$ --- through tensoring --- on $\Malpha$. 
	If $d\,=\,\text{ord}(L)$, choosing a nowhere-vanishing section $s_0\, \in\, H^0(X,\, L^{\otimes d})$,
	construct the spectral curve
	\begin{align}\label{eqn:spectral-curve}
		Y\, :=\, \{v\, \in\, L\,\ \mid\,\ v^{\otimes d}\, \in\, s_0(X)\}.
	\end{align}
	The natural projection $\gamma\,:\,Y\,\longrightarrow\, X$ is an \'etale Galois covering
	with Galois group ${\mathbb Z}/d{\mathbb Z}$. The isomorphism class of this
	covering $\gamma$ does not depend on the choice of the above section $s_0$.
	
	For simplicity, let us first consider the case where $S\,=\,\{p\}$ is a singleton set.  Thus we have a
	system of weights $(\alpha_{_{p,1}}\,<\ \alpha_{_{p,2}}\ <\,\cdots\,<\  \alpha_{_{p,\ell(p)}})$ at $p\in X$. Consider the spectral curve $\gamma:Y\longrightarrow X$ from \eqref{eqn:spectral-curve}. Let
	$$\textnormal{Gal}(\gamma)\ =\ \{1, \,\mu,\,\mu^2,\,\cdots,\,\mu^{d-1}\}\,\subset \,\bb{C}^*.$$ 
	The action of $\textnormal{Gal}(\gamma)$ on $\gamma^{-1}(p)$ is via multiplication. Fix 
	an ordering on the points of $\gamma^{-1}(p)$, say 
	\begin{align}\label{eqn:fibre}
		\gamma^{-1}(p)\ =\ \{q_1,\,q_2,\,\cdots,\,q_d\},
	\end{align}
	such that $\mu^i$ acts on $\gamma^{-1}(p)$ as the cyclic permutation sending $q_j$ to $q_{j+i}$, where the 
	subscript $(j+i)$ is to be understood modulo $d$.
	
	Now, recall that the systems of multiplicities and weights at $p$ are given by
	$$\left(m_{_{p,1}},\ m_{_{p,2}},\ \cdots,\ m_{_{p,\ell(p)}}\right)\ \ \text{ and }\  \
	\left(\alpha_{_{p,1}},\ \alpha_{_{p,2}},\ \cdots ,\ \alpha_{_{p,\ell(p)}}\right)$$
	respectively. Let $\boldsymbol{P}$ denote the collection of all nonempty subsets of $\left\{\alpha_{_{p,1}}, \ \alpha_{_{p,2}}, \ \cdots, \ \alpha_{_{p,\ell(p)}}\right\}$ of cardinality \textit{at most} $\frac{r}{d}$, and let 
	\begin{equation}\label{eqn:set-of-subsets}
		\boldsymbol{T}\ \ :=\ \ \boldsymbol{P}\underset{d\ \textnormal{times}}{\times \cdots\times}\boldsymbol{P},
	\end{equation}
	where $d\,=\,\deg(\gamma)$.
	
	For each $\textbf{t}\,\in\, \textbf{T}$, we want to describe a procedure of associating a system of weights, as 
	well as a collection of systems of multiplicities along $\gamma^{-1}(p)\,=\,\{q_1,\ q_2,\ \cdots,\ q_d\}$. 
	First associate a system of weights to each $\textbf{t}\,\in \,\textbf{T}$ as follows: adopt the notation
	\begin{equation}\label{eqn:set-of-subsets-2}
		\textbf{t} \ =\ \left(\lambda_1(\textbf{t}),\ \lambda_2(\textbf{t}), \ \cdots ,\
		\lambda_d(\textbf{t})\right),
	\end{equation}
	where each $\lambda_j(\textbf{t})\neq \emptyset$ by assumption. Clearly each $\lambda_j(\textbf{t})$ can be 
	arranged into an increasing sequence. Denote by $\lambda_j(\textbf{t})$ the set of weights at $q_j$ for 
	each $1\,\leq\, j\,\leq\, d$. Thus, each $\textbf{t}\,\in\, \boldsymbol{T}$ prescribes a system of weights on 
	$\gamma^{-1}(p)$. Denote this system of weights determined by $\textbf{t}$ by
	$\boldsymbol{\alpha}_{\textbf{t}}$.
	
	Next, for each fixed $\textbf{t}\,\in\, \textbf{T}$, denote by
	\begin{align}\label{eqn:collection-of-sequences}
		A_{\textbf{t}}
	\end{align}
	be the collection of all matrices of size $(d\times \ell(p))$ with \textit{non-negative integer} entries, written in the form
	\begin{align}\label{eqn:matrix}
		\begin{pmatrix}
			n_{_{q_1,1}} & n_{_{q_1,2}} & \cdots & \cdots& n_{_{q_1,\ell(p)}} \\
			n_{_{q_2,1}} & n_{_{q_2,2}} & \cdots & \cdots& n_{_{q_2,\ell(p)}} \\
			\vdots & \vdots & \vdots& \vdots &\vdots\\
			n_{_{q_d,1}} & n_{_{q_d,2}} & \cdots & \cdots& n_{_{q_d,\ell(p)}} \\
		\end{pmatrix}
	\end{align}
	which further satisfy the following two conditions:
	\begin{enumerate}[(a)]
		\item \label{condition-on-multiplicity-1} The numbers in the $j$-th row, namely the
		sequence 
		$$\left(n_{_{q_j,1}}, \ n_{_{q_j,2}},\ \cdots,\ n_{_{q_j,\ell(p)}}\right)$$
		must satisfy the condition that for every  $1\,\leq\, k\,\leq\,\ell(p)$,
		$$(n_{_{q_j,k}}\ =\ 0)\ \ \iff\ \ (\alpha_{_{p,k}}\ \notin\ \lambda_j(\textbf{t})) 
		$$ 
		(\text{see}\ \eqref{eqn:set-of-subsets-2}); note that this makes sense because
		$\lambda_j(\textbf{t})\,\subset\,\{\alpha_{_{p,1}},\ \alpha_{_{p,2}},\ \cdots,\ \alpha_{_{p,\ell(p)}}\}$
		by construction.
		
		\item \label{condition-on-multiplicity-2} 
			the entries in each row of the matrix must add up to $\dfrac{r}{d}$, while the entries
			of the $k$-th column of it must add up to $m_{_{p,k}}$ for each $k\,\in\,[1,\,\ell(p)]$. In other words,
			\begin{align*}
				\sum_{k=1}^{\ell(p)} n_{_{q_j,k}} \ &= \ \frac{r}{d}\ \ \ \forall\ \ j\,\in\,[1,\,d]\\
				\text{and}\ \ \sum_{j=1}^{d} n_{_{q_j,k}} \ &= \ m_{_{p,k}}\ \ \ \forall\ \ k\,\in\,[1,\,\ell(p)].
			\end{align*}
		\end{enumerate}  
		
		\begin{lemma}\label{lem:system-of-multiplicities}
			Fix $\textbf{t}\in \textbf{T}$ (see \eqref{eqn:set-of-subsets}). Consider the collection of matrices satisfying the
			conditions \eqref{condition-on-multiplicity-1} and \eqref{condition-on-multiplicity-2} (the collection is denoted by $A_{\textbf{t}}$ in \eqref{eqn:collection-of-sequences}). Each such matrix gives rise to a system of multiplicities along $\gamma^{-1}(p)$ which is compatible with the system of weights $\boldsymbol{\alpha}_{\textbf{t}}$ described just before \eqref{eqn:collection-of-sequences}.
		\end{lemma}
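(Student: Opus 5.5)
Given a matrix $(n_{q_j,k})\in A_{\textbf{t}}$, the plan is to define the multiplicities at each point $q_j\in\gamma^{-1}(p)$ by reading off the $j$-th row and discarding its zero entries. Then we check that this is a legitimate system of multiplicities attached to the weights $\lambda_j(\textbf{t})$, and that it is consistent with the parabolic push-forward construction of Section \ref{parabolic pushforward}.

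Fix $j\in[1,d]$ and write $\lambda_j(\textbf{t})=\{\alpha_{p,k_1}<\alpha_{p,k_2}<\cdots<\alpha_{p,k_s}\}$, where $k_1<\cdots<k_s$ and $1\le s\le r/d$. At $q_j$ we declare the weight $\alpha_{p,k_i}$ to carry multiplicity $n_{q_j,k_i}$. Condition \eqref{condition-on-multiplicity-1} gives exactly two things.
\begin{enumerate}[$\bullet$]
\item Each $n_{q_j,k_i}$ is a positive integer, so it is an admissible multiplicity, since flags are strictly decreasing.
\item For $k\notin\{k_1,\dots,k_s\}$ we have $n_{q_j,k}=0$, so no multiplicity is attached to a weight outside $\lambda_j(\textbf{t})$.
\end{enumerate}
By the first half of condition \eqref{condition-on-multiplicity-2}, $\sum_i n_{q_j,k_i}=r/d$. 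Hence this data describes a flag
\[F_{q_j}=F_{q_j,1}\supsetneq\cdots\supsetneq F_{q_j,s}\supsetneq 0\]
in a vector space of dimension $r/d$, with $\dim F_{q_j,i}-\dim F_{q_j,i+1}=n_{q_j,k_i}$ and weights $\alpha_{p,k_1}<\cdots<\alpha_{p,k_s}$. That is the required compatibility with $\boldsymbol{\alpha}_{\textbf{t}}$ at $q_j$. The rank $r/d$ is the correct one, because a rank $r$ bundle on $X$ fixed by $L$ is a push-forward $\gamma_*F$ with $\operatorname{rank}F=r/d$. Doing this for every $j$ yields a system of multiplicities along $\gamma^{-1}(p)$.

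Next we justify the column condition, which is what makes the construction relevant. Let $F_*$ be any parabolic bundle on $Y$ with this data, and apply the push-forward recipe of Section \ref{parabolic pushforward}. The weights at $p$ are then $\bigcup_j\lambda_j(\textbf{t})\subset\{\alpha_{p,1},\dots,\alpha_{p,\ell(p)}\}$. In the filtration of $E_p=\bigoplus_j F_{q_j}$, the graded piece corresponding to $\alpha_{p,k}$ has dimension $\sum_j n_{q_j,k}$, since each summand $F_{q_j}$ contributes exactly $n_{q_j,k}$. By the second half of condition \eqref{condition-on-multiplicity-2} this sum equals $m_{p,k}$.

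The one point needing care is whether the union of the $\lambda_j(\textbf{t})$ is all of $\{\alpha_{p,1},\dots,\alpha_{p,\ell(p)}\}$. The column sums settle this: $m_{p,k}>0$ forces some $n_{q_j,k}>0$, and then condition \eqref{condition-on-multiplicity-1} gives $\alpha_{p,k}\in\lambda_j(\textbf{t})$. Therefore $\gamma_*F_*$ has exactly the multiplicities $\boldsymbol{m}$ and weights $\boldsymbol{\alpha}$ at $p$. This matching of indices through the definition of $t(q)$ in the push-forward construction is the only step that is not a direct unwinding of definitions. The case of a general finite $S$ follows by repeating the construction independently at each $p\in S$.
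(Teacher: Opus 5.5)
Your proposal is correct and matches the paper's proof. You read off the $j$-th row, discard the zero entries, and use condition (a) to see that the positive entries sit exactly at the weights in $\lambda_j(\textbf{t})$, with the row sum $r/d$ giving a flag of the right total dimension; you also use both directions of the equivalence in (a), which the paper leaves implicit, and your extra column-sum check that the parabolic push-forward has multiplicities $\boldsymbol{m}$ at $p$ goes beyond the lemma but is correct and is what Proposition \ref{prop:parabolic-fixed-point} tacitly relies on.
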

		
		\begin{proof}
			For any matrix of the form \eqref{eqn:matrix} coming from the collection $A_{\textbf{t}}$, its rows are indexed by the points
			in $\gamma^{-1}(p)\,=\,\{q_1,\ q_2,\ \cdots,\ q_d\}$ by construction. 
			For each $j\,\in\,[1,\,d]$, discard the zero terms from the sequence $\left(n_{_{q_j,1}}\ ,n_{_{q_j,2}},\ \cdots,\ 
			n_{_{q_j,\ell(p)}}\right)$. From condition \eqref{condition-on-multiplicity-2} it follows that $n_{_{q_j,k}}$ 
			is non-zero only when $\alpha_{_{p,k}}\,\in\, \lambda_j(\textbf{t})$; in other words, $n_{_{p,k}}\neq 0$ only when $\alpha_{_{p,k}}$ appears as a weight at $q_j\in\gamma^{-1}(p)$ in the system of weights $\boldsymbol{\alpha}_{\textbf{t}}$. 
			Thus, whenever $n_{_{q_j,k}}\neq 0$, we can associate $n_{_{q_j,k}}$ as the multiplicity of the weight $\alpha_{_{p,k}}$ at $q_j\in \gamma^{-1}(p)$ (see Definition \ref{def:parabolic-bundles}). This procedure certainly
			gives rise to a system of multiplicities along $\gamma^{-1}(p)$ compatible with 
			$\boldsymbol{\alpha}_{\textbf{t}}$, as in the statement of the lemma.
		\end{proof}
		
		To aid the understanding of the above construction, we make the following picture for summarizing the above construction of multiplicities. Below, the vertical arrow denotes the map $\gamma$.
		\[\begin{array}{cc}
			{\begin{array}{c}
					\textcolor{blue}{q_1} \\ \textcolor{blue}{q_2}\\ \textcolor{blue}{\vdots} \\ \textcolor{blue}{q_d}
			\end{array}} &
			{\overbrace{
					\begin{array}{cccccccc}
						n_{_{q_1,1}} & n_{_{q_1,2}} & \cdots & \cdots& n_{_{q_1,\ell(p)}} \\
						n_{_{q_2,1}} & n_{_{q_2,2}} & \cdots & \cdots& n_{_{q_2,\ell(p)}} \\
						\vdots & \vdots & \cdots & \cdots&\vdots\\
						n_{_{q_d,1}} & n_{_{q_d,2}} & \cdots & \cdots&n_{_{q_d,\ell(p)}}
					\end{array}
				}^{\text{numbers in each row add up to}\ \frac{r}{d}\ \text{horizontally}}}\\
			&\\
			\rotatebox{-90}{$\xrightarrow{\text{\ \ \ \ \ }}$} &  \\
			&\\
			\textcolor{blue}{p} & {\underbrace{\begin{array}{cccccccc}
						m_{_{p,1}} & m_{_{p,2}} & \cdots & \cdots & m_{_{p,\ell(p)}}
				\end{array}}_{\text{each}\ k\text{-th column adds up to }\ m_{_{p,k}}}}
		\end{array}
		\]
		For each $\textbf{t}\,\in\, \textbf{T}$ consider the system of weights
		$\boldsymbol{\alpha}_{\textbf{t}}$ along $\gamma^{-1}(p)$ as described just before
		\eqref{eqn:set-of-subsets}. Consider the collection $A_{\textbf{t}}$ of all possible matrices satisfying conditions \eqref{condition-on-multiplicity-1} and \eqref{condition-on-multiplicity-2}. Let $e\,:=\,\deg(\xi)$, and for each system of multiplicities $\textbf{n}$ constructed from a matrix in $A_{\textbf{t}}$ as in Lemma \ref{lem:system-of-multiplicities}, let
		\begin{align*}
			\mc{M}^{\textbf{n},\boldsymbol{\alpha}_{\textbf{t}}}_{Y}\left(\frac{r}{d},e\right)
		\end{align*}
		denote the moduli space of parabolic stable vector bundles on $Y$ of rank $\frac{r}{d}$ and
		degree $e\,=\,\deg(\xi)$, and having the system of multiplicities and weights given by
		$(\boldsymbol{n},\,\boldsymbol{\alpha}_{\textbf{t}})$. Consider the following scheme  for each
		$\textbf{t}\,\in\, \textbf{T}$:
		\begin{equation}\label{eqn:moduli}
			\mathcal{N}^\textbf{t}_L\ :=\ \left\{F_*\,\in\, \coprod_{\boldsymbol{n}\,\in\,
				A_{\textbf{t}}}\mc{M}_Y^{\boldsymbol{n},\boldsymbol{\alpha}_{\textbf{t}}}\left(\frac{r}{d},e\right)\
			\big\vert\ \det(\gamma_*F)\,\simeq\, \xi\right\}.
		\end{equation}
		
		\begin{proposition}\label{prop:parabolic-fixed-point}
			Assume that $\boldsymbol{\alpha}$ is a generic system of weights. Consider a non-trivial $r$-torsion line bundle $L$ on $X$.
			Then there is a surjective morphism 
			\begin{equation*}
				f\,:\,\left(\coprod_{\textbf{t}\in \textbf{T}}\mathcal{N}^{\textbf{t}}_L\right)\,\longrightarrow\, \MLalpha
			\end{equation*} given by
			parabolic push-forward along the spectral curve $\gamma \,:\, Y\,\longrightarrow\, X$ associated to
			$L$ (see \eqref{eqn:spectral-curve}).
		\end{proposition}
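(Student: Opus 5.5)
The argument has two halves. First, parabolic push-forward along $\gamma$ lands in $\MLalpha$. Second, every fixed point is such a push-forward. The second half is the main content.

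\emph{Well-definedness.} Take $F_*\in\mc{M}_Y^{\boldsymbol{n},\boldsymbol{\alpha}_{\textbf{t}}}(\frac{r}{d},e)$ with $\det(\gamma_*F)\simeq\xi$, and form $E_*=\gamma_*F_*$ as in Section \ref{parabolic pushforward}.
\begin{enumerate}[(i)]
\item \emph{Type at $p$.} The weights at $p$ are $\bigcup_j\lambda_j(\textbf{t})$. The multiplicity of $\alpha_{_{p,k}}$ is $\sum_j n_{_{q_j,k}}=m_{_{p,k}}$ by condition \eqref{condition-on-multiplicity-2}. Since $m_{_{p,k}}>0$, every $\alpha_{_{p,k}}$ actually occurs, so $E_*$ has type $(\boldsymbol{m},\boldsymbol{\alpha})$ and rank $r$.
\item \emph{Invariance under $L$.} We have $\gamma^*L\simeq\mc{O}_Y$, since $Y$ is the spectral curve of $L$. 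The projection formula then gives $E\otimes L\simeq\gamma_*(F\otimes\gamma^*L)\simeq E$. This isomorphism preserves the flags at $p$, because tensoring by $L$ acts fibrewise by scalars on each summand $F_{q_j}$. So $E_*\otimes L\simeq E_*$ parabolically.
\item \emph{Stability.} Push-forward of a parabolic semistable bundle along an étale map is parabolic semistable. The reason is that $\gamma^*\gamma_*F_*=\bigoplus_{\sigma}\sigma^*F_*$ is semistable, and a destabilizing subbundle of $E_*$ would pull back to a destabilizing subbundle of this sum. Genericity of $\boldsymbol{\alpha}$ then upgrades semistability to stability.
\item \emph{Algebraicity.} Push-forward works in families, so $f$ is a morphism.
\end{enumerate}

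\emph{Surjectivity, the main step.} Let $E_*\in\MLalpha$ and fix a parabolic isomorphism $\phi:E_*\to E_*\otimes L$.
\begin{enumerate}[(i)]
\item \emph{Normalizing $\phi$.} Since $E_*$ is stable it is simple, so the composite $\phi^{\otimes}$ of $\phi$ iterated $d$ times is a scalar $c$. Rescaling by $c^{1/d}$ makes $\phi^d=s_0$.
\item \emph{Descent to $Y$.} This makes $E$ a module over $\bigoplus_{i<d}L^{-i}\simeq\gamma_*\mc{O}_Y$. By the standard spectral correspondence (as in \cite{BCD23}), $E\simeq\gamma_*F$ with $F$ a vector bundle of rank $r/d$ on $Y$. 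Here $F$ is locally free because $\gamma$ is étale, and its rank is $r/d$ because the Galois group permutes the eigensheaves transitively.
\item \emph{Splitting the fibre at $p$.} At $p$, $\phi_p$ is diagonalizable with eigenspaces $F_{q_j}$ for the $d$ distinct $d$-th roots, so $E_p=\bigoplus_jF_{q_j}$. Since $\phi$ is parabolic and invertible, $\phi_p(E_{_{p,k}})=E_{_{p,k}}$ after trivializing $L_p$.
\item \emph{Compatible flags.} Lemma \ref{lem:eigenbasis} gives nested eigenbases. Hence $E_{_{p,k}}=\bigoplus_j(E_{_{p,k}}\cap F_{q_j})$, and this induces flags on each $F_{q_j}$.
\item \emph{The data $(\textbf{t},\textbf{n})$.} Set $n_{_{q_j,k}}=\dim(E_{_{p,k}}\cap F_{q_j})-\dim(E_{_{p,k+1}}\cap F_{q_j})$. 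Take $\lambda_j(\textbf{t})$ to be the set of $\alpha_{_{p,k}}$ with $n_{_{q_j,k}}\neq0$. This set is nonempty, of size at most $r/d$, and the resulting matrix satisfies \eqref{condition-on-multiplicity-1} and \eqref{condition-on-multiplicity-2}.
\item \emph{Recovering $E_*$.} By construction the parabolic push-forward of this $F_*$ recovers $E_*$.
\item \emph{Degree and determinant.} We have $\det\gamma_*F=\xi$ by assumption. Since $\gamma$ is étale, $\deg\gamma_*F=\deg F$, so $\deg F=e$.
\item \emph{Stability of $F_*$.} A destabilizing parabolic subbundle $F'_*\subset F_*$ would give $\gamma_*F'_*\subset E_*$. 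Its parabolic slope is $\text{par}\mu(F'_*)/d$ plus a normalization common to both sides, and the same holds for $F_*$, so $\gamma_*F'_*$ would violate the stability of $E_*$. Thus $F_*$ is stable.
\end{enumerate}

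The obstacle I expect is the bookkeeping in the last three steps of surjectivity, in particular the slope comparison between $F_*$ and $\gamma_*F_*$ under the weight-union construction. The key point there is that parabolic degree is additive, $\text{par}\deg(\gamma_*F_*)=\text{par}\deg(F_*)$, because each weight contributes with the same multiplicity upstairs and downstairs.
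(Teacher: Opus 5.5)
Your proposal is correct and follows essentially the same route as the paper. It proves invariance under $L$ via the trivialization $\gamma^*L\simeq\mathcal{O}_Y$, and stability via $\gamma^*\gamma_*F_*\simeq\bigoplus_\sigma\sigma^*F_*$ together with genericity. For surjectivity it normalizes $\phi^d=s_0$ (that is, rescale by $c^{-1/d}$), descends $E$ to $Y$, splits the $\phi_p$-invariant flag of $E_p$ along the eigenspaces $F_{q_j}$, and transfers stability back using $\text{par}\deg(\gamma_*F'_*)=\text{par}\deg(F'_*)$, which holds exactly with no extra normalization term. Your explicit construction of the matrix $(n_{q_j,k})$, with $\lambda_j(\textbf{t})$ read off from its nonzero entries, and your checks of the type at $p$ and of $\deg F=e$ fill in details that the paper only asserts.
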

		
		\begin{proof}
			For the sake of avoiding complicated notation, we assume that there is only one parabolic point
			$S\,=\,\{p\}$. The general case is very similar. The 
			map $f$ in the statement of the lemma sends any $F_*$
			to the parabolic push-forward of $F_*$ constructed in Section \ref{parabolic pushforward}.
			
			It will be shown that $f(F_*)$ is parabolic semistable. Denote $E_*\,:= \,f(F_*)$. Then 
			$$\gamma^* E_* \cong\ \bigoplus_{\sigma\in 
				\textnormal{Gal}(\gamma)} \sigma^* F_*
			$$ 
			(\cite[p.~68]{Ses}), where $\sigma^* F_*$ has the obvious parabolic 
			structure coming from $F_*$. Clearly $\text{par}\mu(\sigma^*F_*)\,=\,\text{par}\mu(F_*)$ for all $\sigma$.
			Consequently, $\gamma^* E_*$ is a direct sum of parabolic stable bundles of same parabolic slope, which 
			immediately implies that $\gamma^* E_*$ is parabolic semistable. Thus $E_*$ must be parabolic 
			semistable as well, since any sub-bundle $E'_*\,\subset\, E_*$ with strictly larger parabolic 
			slope would give rise to a subbundle $\gamma^* E'_*\,\subset \,\gamma^* E_*$ of strictly 
			larger parabolic slope, contradicting the parabolic semistability of $\gamma^* E_*$.
			Therefore, $f(F_*)$ is parabolic semistable.
			
			As $\boldsymbol{\alpha}$ is a generic system of weights, it now
			follows that $E_* \,=\, f(F_*)$ is parabolic stable.
			
			It will be shown that $\textnormal{Im}(f)\,\subseteq\,\MLalpha$. There is a tautological 
			trivialization of $\gamma^*L$ over $Y$, which induces an isomorphism 
			\begin{equation}\label{tautological trivialization}
				\theta\ :\ \mathcal{O}_{Y}\ \overset{\simeq}{\longrightarrow}\ \gamma^*L.
			\end{equation}  
			This induces an isomorphism
			\begin{equation*}
				{\rm Id}_F\otimes \theta \ :\ F\ \xrightarrow{\,\,\,\simeq\,\,\,}\ F\otimes \gamma^*L.
			\end{equation*}
			Consider
			$$
			\psi\, :=\, \gamma_*({\rm Id}_F\otimes\theta)\, :\, \gamma_*F \, \xrightarrow{\,\,\,\simeq\,\,\,}\,
			\gamma_*(F\otimes \gamma^*L)\,=\, (\gamma_*F)\otimes L\,=\, E\otimes L;
			$$
			the above $\gamma_*(F\otimes \gamma^*L)\,=\, (\gamma_*F)\otimes L$ is given by the projection formula.
			Now $E_p\,=\, \bigoplus_{i=1}^{d}F_{q_i}$ (see \eqref{eqn:fibre}), and the map $\psi_p\,:\,E_p
			\,\longrightarrow\, E_p\otimes L_p$ on the fiber takes $F_{q_i}$ to $F_{q_i}\otimes L_p$, which clearly implies that
			$\psi_p$ preserves the filtration induced on $E_p$. Thus we have $\text{Im}(f)\,\subseteq\, \MLalpha.$
			
			To show that the map $f$ is surjective, take any $E_*\,\in\, \MLalpha$, so there is an isomorphism
			of parabolic bundles 
			\[\varphi_*\ :\ E_*\ \xrightarrow{\,\,\,\simeq\,\,\,}\ E_*\otimes L.\] 
			Since $E_*$ is parabolic stable, it is simple, and hence any parabolic endomorphism of $E_*$ is 
			a constant scalar multiplication. As a consequence, any two parabolic isomorphisms from 
			$E_*$ to $E_*\otimes L$ will differ by a constant scalar multiplication. Thus, we can re-scale $\varphi_*$ 
			by multiplying with a nonzero scalar, so that the $d$--fold composition
			\[\underset{\text{d-times}}{\varphi_*\,\circ\cdots\circ\,\varphi_*}\ :\ E_*
			\ \longrightarrow\ E_*\otimes L^d\]
			coincides with $\textnormal{Id}_{E_*}\otimes s_0$, where $s_0$ is the
			nowhere vanishing section of $L^d$ used in the construction of $Y$ (see
			\eqref{eqn:fibre}).
			It follows from the proof of \cite[Lemma 2.1]{BH10} that there exists a vector bundle $F$ on $Y$ of rank
			$\frac{r}{d}$ with $\gamma_*(F)\,\cong\, E$; the argument from \cite{BH10} will be briefly
			recalled. Consider the pull-back $\gamma^*\varphi$, and compose it with the tautological trivialization of
			$\gamma^*L$ to get a homomorphism 
			\[\phi\ :\ \gamma^*E\ \longrightarrow\ \gamma^*E.\] 
			Since $Y$ is irreducible, the characteristic polynomial of $\phi_y$ remains unchanged as
			$y\,\in \,Y$ moves. Consequently, $\gamma^*E$ decomposes into eigenspace
			sub-bundles. If $F$ is an eigenspace sub-bundle of $\gamma^*E$, then
			$\gamma_* F\,\cong\, E$. Moreover, the decomposition $\gamma^* E\,=\,\bigoplus_{i=1}^{d}(\mu^i)^*F$ is precisely
			the decomposition of $\gamma^* E$ into eigenspace sub-bundles.
			
			Our next task is to produce a parabolic structure on $F$ along $\gamma^{-1}(p)$ so that the parabolic push-forward $\gamma_*(F_*)$ (in the sense of \S~\ref{parabolic pushforward}) coincides with $E_*$. First, 
			recall the description of $\theta$ in (\ref{tautological trivialization}), and notice that for any choice of
			$q\,\in\, \gamma^{-1}(p)$, the map $\phi_q$ is precisely the composition of maps
			\begin{align}\label{phi_q}
				(\gamma^*E)_q=E_p\xrightarrow{\varphi_p} E_p\otimes L_p = (\gamma^*E)_q\otimes (\gamma^*L)_q\xrightarrow{Id\otimes(\theta_q)^{-1}}(\gamma^*E)_q,\,
			\end{align}
			where $\theta_q\,:\, \bb{C}\,\longrightarrow\, (\gamma^*L)_q\,=\,L_p$ is defined by
			$z\,\longmapsto\, z\cdot q$. Thus, if
			\begin{align*}
				E_p\,=\,E_{_{p,1}}\,\supsetneq \,E_{_{p,2}}\,\supsetneq\,\cdots\,\supsetneq\,E_{_{p,\ell(p)}}\,\supsetneq\,0
			\end{align*}
			is the given parabolic filtration of the fiber $E_p$, then as $\varphi_*$ is a parabolic isomorphism, the following
			holds:
			\begin{align}
				\forall\ j\in[1,\,\ell(p)],\,\,\,\left(\varphi_p(E_{_{p,j}})\,=\,E_{_{p,j}}\otimes L_p\right)\,\implies\,\left(\phi_q(E_{_{p,j}})\,=\,E_{_{p,j}}\right)\,\,\,\,[\text{from}\,(\ref{phi_q})].
			\end{align}
			We have thus obtained a diagonalizable linear map $\phi_q\,:\,E_p\,\longrightarrow\, E_p$
			with the property that the filtration of $E_p$ given by
			\begin{equation*}
				E_p\,=\,E_{_{p,1}}
				\,\supsetneq\,\cdots\,\supsetneq\, E_{_{p,\ell(p)}}\,\supsetneq\,0
			\end{equation*}
			is preserved by $\phi_q$. Now Lemma \ref{lem:eigenbasis}
			guarantees the existence of a basis $\mc{B}_j$ of $E_{_{p,j}}$ consisting of eigenvectors for each
			$i$, satisfying  $\mc{B}_{j}\,\supset\,\mc{B}_{j+1}$ for all $1\,\leq\, j\,\leq\, \ell(p)$. 
			
			From these data, a weighted filtration is prescribed on the fiber $F_{q_j}$ of the vector bundle $F$ for each
			$q_j\,\in\, \gamma^{-1}(p)$ in the following manner: for each $k\,\in\,[1,\,\ell(p)]$, whenever
			$\mc{B}_{k}\cap F_{q_{_{j}}}\,\neq\,\emptyset$, define the following subspace of $F_{_{q_j}}$:
			$$F_{_{q_{_{j}},k}}\ :=\ \langle\mc{B}_{k}\cap F_{q_{_{j}}}\rangle\ \ \ (\text{the subspace generated by the
				intersection}).$$
			Now, for any two such numbers $k\,<\,k'$, we have
			$\mc{B}_k\,\supset\, \mc{B}_{k'},$ which implies that $F_{_{q_j,k}}\,\supset\, F_{_{q_j,k'}}.$
			This immediately produces a weighted filtration of $F_{q_{j}}$ by assigning the
			weight $\alpha_{_{p,k}}$ to   $F_{_{q_{j},k}}$ whenever $\mc{B}_k\cap F_{_{q_j}}\,\neq\,\emptyset$.
			
			By applying this process for all $1\,\leq\, j\,\leq\, \ell(p)$, a parabolic vector bundle $F_*$ on $Y$
			is constructed, for which (see \S~\ref{parabolic pushforward})
			$$\gamma_* F_* \ =\ E_* .$$
			It is not hard to see that the system of multiplicities associated to such filtration do come from a matrix as in \eqref{eqn:matrix}.

			Note that $F_*$ must be parabolic stable, because if $F'\,\subset\, F$ is any sub-bundle
			such that $\text{par}\mu(F'_*)\,\geq \,\text{par}\mu(F_*)$ (see Definition \ref{def:parabolic-degree-and-slope}), then the equalities
			\[\text{par}\deg(\gamma_*(F'_*))\,=\,\text{par}\deg(F'_*)\ \ \text{ and }\ \ 
			\text{rank}(\gamma_*(F')) \,=\, m\cdot \text{rank}(F')\]
			would imply that $\gamma_*(F')\,\subset\, E$ violates the parabolic stability condition
			of $E_*$. Thus $F_*\,\in\, \mathcal{N}^\textbf{t}_L$ (see \eqref{eqn:moduli}) for
			some $\textbf{t}\,\in\, \boldsymbol{T}$ (see \eqref{eqn:set-of-subsets}).   and the proposition follows.
		\end{proof}
		
		\begin{remark}\label{rem:multipli-parabolic-points}
			If the cardinality of the set of parabolic points $S\,\subset\, X$ is greater than $1$, we repeat this
			construction for each of the parabolic points in $S$. Say, for example, $S\,=\,\{p_1,\ p_2,\ \cdots,\ p_n\}$;
			then for each $1\,\leq\, i\,\leq\, n$, define the set $\boldsymbol{T}_i$ analogous to $\boldsymbol{T}$ in
			\eqref{eqn:set-of-subsets}, and replace $\boldsymbol{T}$ by $\boldsymbol{T}_1\times\boldsymbol{T}_2
			\times\cdots\boldsymbol{T}_n$. It is not difficult to see that a similar argument as in Proposition
			\ref{prop:parabolic-fixed-point} will prove the same result, albeit the notation will get more cumbersome.
		\end{remark}
		
		\begin{corollary}\label{cor:codimension-estimate}
			Let $\boldsymbol{\alpha}$ be a generic system of weights. The codimension of the closed subscheme
			\begin{equation*}
				Z_{\boldsymbol{\alpha}}\ :=\ \underset{L\in\Gamma\setminus\{\mathcal{O}_X\}}{\bigcup}\MLalpha\
				\subset\ \Malpha
			\end{equation*}
			is at least three.
		\end{corollary}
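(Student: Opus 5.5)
Since $\Gamma$ is finite, $Z_{\boldsymbol{\alpha}}$ is a finite union of the closed subschemes $\MLalpha$. It therefore suffices to show, for each fixed nontrivial $L\in\Gamma$ of order $d>1$ (with $d\mid r$), that $\dim \MLalpha\le \dim\Malpha-3$. By Proposition \ref{prop:parabolic-fixed-point} and Remark \ref{rem:multipli-parabolic-points}, $\MLalpha$ is the image of the finite disjoint union $\coprod_{\textbf{t}}\mathcal{N}^{\textbf{t}}_L$. Hence $\dim\MLalpha\le \max_{\textbf{t},\boldsymbol{n}}\dim\bigl(\mathcal{N}^{\textbf{t}}_L\cap \mc{M}_Y^{\boldsymbol{n},\boldsymbol{\alpha}_{\textbf{t}}}(r/d,e)\bigr)$, and the task reduces to a dimension count on $Y$, with the fixed-point dimension bounded above by $\dim\mc{M}_Y^{\boldsymbol{n},\boldsymbol{\alpha}_{\textbf{t}}}(r/d,e)$ minus the codimension of the determinant condition.

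On $X$ we have
$$\dim\Malpha=(r^2-1)(g-1)+\sum_{p\in S}\dim \mathrm{Fl}_p,\qquad \dim\mathrm{Fl}_p=\tfrac12\Bigl(r^2-\sum_k m_{p,k}^2\Bigr).$$
On $Y$, of genus $g_Y-1=d(g-1)$, the moduli of stable parabolic bundles of rank $r/d$ and degree $e$ has dimension
$$(r/d)^2\,d(g-1)+1+\sum_{j}\tfrac12\Bigl((r/d)^2-\sum_k n_{q_j,k}^2\Bigr).$$
Fixing $\det(\gamma_*F)\simeq\xi$ is a condition through the norm map $\Pic(Y)\to\Pic(X)$, whose fibres have dimension $g_Y-g=(d-1)(g-1)$. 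Since $\det\gamma_*F\cong \mathrm{Nm}(\det F)\otimes(\text{fixed})$, this condition cuts the dimension by $g$. So the fixed locus has dimension at most $\frac{r^2}{d}(g-1)+1-g+\sum_{p}\sum_{j}\frac12\bigl((r/d)^2-\sum_k n_{q_j,k}^2\bigr)$. This could also be checked against the finite-to-one property of $f$, but only an upper bound is needed.

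The codimension is then bounded below by the sum of a curve term and a flag term. The curve term is
$$(r^2-1)(g-1)-\tfrac{r^2}{d}(g-1)+g-1=r^2\bigl(1-\tfrac1d\bigr)(g-1)\ \ge\ \tfrac{r^2}{2}(g-1)\ \ge\ 2(g-1)\ \ge 2,$$
which equals $2$ only when $r=d=2$ and $g=2$. For the flag term I need, at each parabolic point $p$,
$$\tfrac12\Bigl(r^2-\sum_k m_{p,k}^2\Bigr)\ \ge\ \sum_j\tfrac12\Bigl((r/d)^2-\sum_k n_{q_j,k}^2\Bigr).$$
This is equivalent to $r^2-\frac{r^2}{d}\ge \sum_k m_{p,k}^2-\sum_{j,k}n_{q_j,k}^2=\sum_k\sum_{j\ne j'}n_{q_j,k}n_{q_{j'},k}$, using the column sums. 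The right-hand side is at most $\sum_{j\ne j'}\sum_k n_{q_j,k}n_{q_{j'},k}\le\sum_{j\ne j'}(r/d)(r/d)=d(d-1)r^2/d^2=r^2(1-1/d)$, using the row sums. So the flag contribution is nonnegative.

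Thus the codimension is at least $r^2(1-1/d)(g-1)$, which is at least $3$ except in the boundary case $r=d=2$, $g=2$. I expect this boundary case to be the main obstacle. There I would sharpen the flag inequality: equality in the row-sum bound forces $n_{q_1,k}n_{q_2,k'}=0$ for $k\ne k'$, while condition (a) makes each row's support nonempty, which gives extra room. Alternatively, I would sharpen the count on the $X$ side, using that $\dim\mc{M}_Y$ over-counts because $f$ identifies $F_*$ with $\sigma^*F_*$ (this does not lower the dimension), or more plausibly use that the determinant condition with $g\ge2$ contributes one more. If the case still resists, I would fall back on the known full-flag/nonparabolic estimate from \cite{BCD23} for $r=2$, $g=2$, since the closure of the parabolic fixed locus maps onto fixed loci whose codimension is already computed there. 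In writing this up I would treat the generic case cleanly via the two inequalities above and isolate $r=2$, $g=2$ as a separate check.
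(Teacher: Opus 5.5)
You are following the paper's proof. Both bound $\dim\MLalpha$ via the surjection of Proposition~\ref{prop:parabolic-fixed-point}, and both get the curve term $r^2(1-\frac1d)(g-1)$, with the determinant condition costing exactly $g$. Your row/column-sum inequality is an equivalent form of the paper's bound showing that the flag dimension on $X$ dominates the one on $Y$. The paper then simply asserts $r^2(g-1)(1-\frac1d)\ge 3$, which is false for $r=d=2$, $g=2$, precisely the case you isolated. So your proposal has a genuine gap there, and the paper shares it. The gap cannot be closed in general, because the statement itself fails in part of that case.

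For $r=2$ each flag is either trivial, $m_p=(2)$, or full, $m_p=(1,1)$. If $m_p=(1,1)$, every matrix in $A_{\textbf{t}}$ is a $2\times 2$ permutation matrix. Then $\sum_k m_{p,k}^2-\sum_{j,k}n_{q_j,k}^2=0<2=r^2(1-\frac1d)$, and your sharpening yields an extra $+1$ at $p$. Hence the codimension is at least $2(g-1)+\#\{p\in S:\ \ell(p)=2\}$, which is $\ge 3$ as soon as one flag is nontrivial.

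If all flags are trivial, there is no extra room and the bound is attained. In that case $\Malpha$ is $SU_X(2,\xi)$ with $g=2$ and $\deg\xi$ odd, a smooth threefold. The fixed locus of $L$ is $\{\gamma_*M:\ \mathrm{Nm}(M)\cong\xi\otimes L\}$, of dimension $g_Y-g=1$ (an elliptic curve). Here $f$ is finite-to-one, since $\gamma_*M\cong\gamma_*M'$ forces $M'\cong M$ or $M'\cong\sigma^*M$. So the codimension is exactly $2$.

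Your other fallbacks cannot help. You showed yourself that the determinant condition cuts exactly $g$, and \cite{BCD23} only treats full flags. The honest conclusion of your computation is that the codimension is $\ge 3$ unless $r=g=2$ and all flags are trivial, in which case it equals $2$. Alternatively, state codimension $\ge 2$ in all cases. That weaker bound always holds and is what Brauer-group purity \cite{C19} requires in the proof of Theorem~\ref{thm:brauer-group-of-moduli-stack}.
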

		
		\begin{proof}
			Recall the filtration data along the parabolic points $S$ as described in \eqref{eqn:filtration-data}. Now, for
			any parabolic point $p\,\in\, S$, the flag variety of all filtrations of $\bb{C}^r$ of flag type
			$(m_{_{p,1}} ,\, m_{_{p,2}},\, \cdots ,\, m_{_{p,\ell(p)}})$ (where $m_{_{p,j}}$'s are as in Definition \ref{def:parabolic-bundles}) is of dimension
			\begin{equation}\label{eqn:flag-variety-dimension}
				\sum_{i=1}^{\ell(p)-1}m_{_{p,i}}\left(m_{_{p,i+1}}\ +\  \cdots\ +m_{_{p,\ell(p)}}\right)\ \ 
				(\text{see, e.g. \cite[2.1]{B04}}.)
			\end{equation}
			Now, from \cite{MS80} we get dimension of the moduli space $\Malpha$ to be
			$$\dim\left(\Malpha\right)\ = \ (r^2-1)(g-1)+\sum_{p\in S}\sum_{i=1}^{\ell(p)-1}m_{_{p,i}}\left(m_{_{p,i+1}}\ 
			+\  \cdots\ +m_{_{p,\ell(p)}}\right).$$
			Moreover, writing $\gamma^{-1}(p)\,=\,\{q_{_{p,1}},\ \cdots, \ q_{_{p,d}}\}$ for each $p\,\in\, S$, by
			condition \eqref{condition-on-multiplicity-2} on the collection $A_{\textbf{t}}$
			(see \eqref{eqn:collection-of-sequences}),
			\begin{equation*}
				\sum_{j=1}^{d} n_{_{q_{_{p,j}},i}}\ =\ m_{_{p,i}}\ \ \text{ for each}\ \  1\leq i\leq \ell(p)\ \ \text{and}\ \ p\in S.
			\end{equation*} 
			Thus, for each $\textbf{t}\,\in\, \boldsymbol{T}$ (see \eqref{eqn:set-of-subsets}), using the description
			of $\mc{N}^\textbf{t}_L$ in \eqref{eqn:moduli} and by allowing some of the $n_{_{q_{p,j},i}}$'s to be zero
			in the following equation,
			\begin{align*}
				\dim(\mc{N}^\textbf{t}_L)\ &=\ \left(\frac{r^2}{d^2}(\text{genus}(Y)-1)+1\right)-g\ +\
				\sum_{p\in S}\left(\sum_{j=1}^{d} \sum_{i=1}^{\ell(p)-1} n_{_{{q_{_{p,j}},i}}}\left(n_{_{q_{_{p,j}},i+1}}\
				+\ \cdots\ +\  n_{_{q_{_{p,j}},\ell(p)}}\right)\right)\nonumber\\
				& (\text{by \eqref{eqn:flag-variety-dimension}})\nonumber\\
				&= \ (g-1)(\frac{r^2}{d}-1)\ +\ \sum_{p\in S}\left(\sum_{j=1}^{d} \sum_{i=1}^{\ell(p)-1}
				n_{_{{q_{_{p,j}},i}}}\left(n_{_{q_{_{p,j}},i+1}}\ +\ \cdots\ +\  n_{_{q_{_{p,j}},\ell(p)}}\right)\right)\nonumber\\
				& [\text{because}\ \, \text{genus}(Y)\,=\,m(g-1)+1]\nonumber\\
				&\leq \ (g-1)(\frac{r^2}{d}-1)\ +\ \sum_{p\in S}\left(\sum_{j=1}^{d} \sum_{i=1}^{\ell(p)-1} n_{_{q_{_{p,j}},i}}
				\left(m_{_{p,i+1}}\ +\ \cdots\ +\  m_{_{p,\ell(p)}}\right)\right)\nonumber\\
				& [\text{because}\ \, n_{_{q_{_{p,j}},i}}\, \leq\, m_{_{p,i}}\ \, \forall \ \,i,\,j]\nonumber
			\end{align*}
			
			It follows that, for each $\textbf{t}\,\in\, \boldsymbol{T}$,
			\begin{align*}
				&\dim\left(\Malpha\right)- \dim\left(\mc{N}^\textbf{t}_L\right)\nonumber\\
				&\geq\ r^2(g-1)\left(1-\frac{1}{d}\right)\ +\ 
				\sum_{p\in S}\left( \sum_{i=1}^{\ell(p)-1}\left(m_{_{p,i+1}}\ +\ \cdots\ +\
				m_{_{p,\ell(p)}}\right) \left(m_{_{p,i}}-\sum_{j=1}^{d}n_{_{q_{_j},i}}\right)\right)\nonumber\\
				&\geq\ r^2(g-1)\left(1-\frac{1}{d}\right)\ \ \ \ \ \left[\text{because}\ \,m_{_{p,i}}\,=
				\, \sum_{j=1}^{d}n_{_{q_{_j}},i}\ \ \text{by condition}\ \eqref{condition-on-multiplicity-2}\right]\nonumber\\
				&\geq\ 3 .
			\end{align*}
			The result now follows from Proposition \ref{prop:parabolic-fixed-point} and Remark \ref{rem:multipli-parabolic-points}.
		\end{proof}
		
		\section{Brauer group of the moduli stack of parabolic
			$\text{PGL}(r,\bb{C})$-bundles}\label{section:brauer-group-moduli-stack}
		
		As before, fix a finite set of points $S\,\subset\, X$, a positive integer $r$ and line bundle $\xi$ on $X$.
		
		\begin{definition}
			A parabolic stable $\textnormal{PGL}(r,\bb{C})$-bundle is an equivalence class of parabolic stable vector bundles, where two parabolic stable vector bundles $E_*$ and $E'_*$ 
			are considered equivalent if there exists a line bundle $\mc{L}$ such that $E'_*\,\simeq\, E_*\otimes \mc{L}$ as parabolic vector bundles.
		\end{definition}
		
		Let $\delta\,\in\,[0,\,r-1]$ be the unique integer satisfying the condition
		$\deg(\xi)\ \equiv\ \delta\ (\text{mod}\ r)$. The coarse moduli space of parabolic stable
		$\text{PGL}(r,\bb{C})$--bundles on $X$ of topological type $\delta$ and systems of multiplicities and
		weights $(\boldsymbol{m,\,\alpha})$ will be denoted by $\mc{N}^{\boldsymbol{m,\alpha}}_X(r,\delta)$.
		Also, denote by $\mf{N}^{\boldsymbol{m,\alpha}}_X(r,\delta)$ the moduli \textit{stack} of parabolic stable
		$\text{PGL}(r,\bb{C})$--bundles.
		
		\begin{theorem}\label{thm:brauer-group-of-moduli-stack}
			Let $\boldsymbol{\alpha}$ be a generic system of weights. Let $\Nalpha^{sm}$ denote the smooth locus of $\Nalpha$. Then,
			$$\Br\left(\mf{N}^{\boldsymbol{m,\alpha}}_X(r,\delta)\right)\,\ \simeq\,\
			\Br\left(\Nalpha^{sm}\right).$$
		\end{theorem}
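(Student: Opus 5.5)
The plan is to realize both sides through the free locus of the $\Gamma$-action on $\Malpha$ and compare them there. First, I will identify the coarse space: every parabolic stable $\textnormal{PGL}(r,\bb{C})$-bundle of topological type $\delta$ lifts to a parabolic stable vector bundle with determinant $\xi$, and two such lifts differ by tensoring with an element of $\Gamma$. Hence
$$\Nalpha\ \simeq\ \Malpha/\Gamma .$$
Similarly, the stack $\mf{N}^{\boldsymbol{m,\alpha}}_X(r,\delta)$ is the quotient stack $[\mc{M}/\Gamma]$. Here $\mc{M}$ is the moduli stack of parabolic stable bundles with determinant $\xi$; it is a $\mu_r$-gerbe over $\Malpha$, because the automorphisms of a stable bundle are scalars and fixing the determinant cuts these down to $\mu_r$.

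Next, let $U\subset \Malpha$ be the complement of $Z_{\boldsymbol{\alpha}}$, so that $\Gamma$ acts freely on $U$. Then $U/\Gamma$ is smooth and open in $\Nalpha$. Since $\Malpha$ is smooth (genericity of $\boldsymbol{\alpha}$) and $\Gamma$ is finite, the smooth locus of $\Nalpha=\Malpha/\Gamma$ contains $U/\Gamma$. By Corollary \ref{cor:codimension-estimate} and finiteness of the quotient map, the complement of $U/\Gamma$ in $\Nalpha^{sm}$ has codimension at least $3\geq 2$. Purity of the Brauer group for regular schemes then gives
$$\Br(\Nalpha^{sm})\ \simeq\ \Br(U/\Gamma).$$

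On the stack side, write $\mf{U}$ for the open substack of $\mf{N}^{\boldsymbol{m,\alpha}}_X(r,\delta)$ lying over $U/\Gamma$. Its complement also has codimension at least $3$, since the stack is smooth and equidimensional over the smooth coarse space. Purity for Brauer groups of smooth algebraic stacks (reduce to a smooth atlas, or use the Gysin sequence in étale cohomology of $\bb{G}_m$, where codimension $\geq 2$ kills $H^2$ and $H^3$ contributions) gives $\Br(\mf{N})\simeq \Br(\mf{U})$. Over $U/\Gamma$, the stack $\mf{U}$ is a gerbe, namely the gerbe of $\textnormal{PGL}$-bundle automorphisms. A stable $\textnormal{PGL}$-bundle whose vector bundle lift has no nontrivial $L$-fixing has trivial automorphism group: any automorphism of the $\textnormal{PGL}$-bundle comes from an isomorphism $E_*\simeq E_*\otimes L$, and off $Z_{\boldsymbol{\alpha}}$ only $L=\mc{O}_X$ occurs, giving scalars. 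So $\mf{U}\to U/\Gamma$ is an isomorphism, i.e. $\mf{U}$ is an algebraic space equal to $U/\Gamma$. Combining the three isomorphisms yields the theorem.

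The main obstacle is justifying the two purity statements in the stacky and possibly singular setting. On the coarse side, one must check that $U/\Gamma$ has complement of codimension $\geq 2$ inside the smooth locus specifically, not just inside $\Nalpha$; this follows because the image of $Z_{\boldsymbol{\alpha}}$ has the same dimension as $Z_{\boldsymbol{\alpha}}$. On the stack side, I would first try to reduce to the scheme case via the smooth presentation of $\mf{N}$ by a $\textnormal{PGL}$-torsor over a Quot-type scheme, applying cohomological purity (Gabber) there and descending. If that is awkward, I would instead compute $H^2(-,\bb{G}_m)$ through the Leray spectral sequence for $\mf{N}\to\Nalpha$, using that the higher direct images are supported on $Z_{\boldsymbol{\alpha}}$.
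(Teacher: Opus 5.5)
You follow essentially the paper's route: identify $\Nalpha\simeq\Malpha/\Gamma$, discard $Z_{\boldsymbol{\alpha}}$ via Corollary~\ref{cor:codimension-estimate}, use purity on the stack side (the paper cites \cite[Proposition 4.2]{BCD23}) and on $\Nalpha^{sm}$ (\cite[Theorem 1.1]{C19}), and observe that over the free locus the stack is just the scheme $\mc{U}/\Gamma$. One correction: the stack is $[\Malpha/\Gamma]$ itself, not a $\Gamma$-quotient of a $\mu_r$-gerbe (that model would put $\mu_r$ into every stabilizer, contradicting your own correct computation that $\textnormal{PGL}(r,\bb{C})$-bundles off $Z_{\boldsymbol{\alpha}}$ have trivial automorphism groups), and with the correct model $\Malpha$ is a finite \'etale atlas, so both the codimension bound and the stacky purity are checked directly on $\Malpha$ rather than through the singular coarse space or a Quot-scheme presentation.
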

		
		\begin{proof}
			Recall that the group $\Gamma$ of $r$-torsion line bundles on $X$ act on the moduli space $\Malpha$ of parabolic 
			stable vector bundles. Through this action, $\Nalpha$ is naturally isomorphic to the quotient variety 
			$\Malpha/\Gamma$ (see \cite[p.~196, \textbf{6}]{BLS98}).
			
			Let $Z_{\boldsymbol{\alpha}}$ be as in Corollary \ref{cor:codimension-estimate}, and denote
			$\mc{U}\,:=\,\Malpha\setminus Z_{\boldsymbol{\alpha}}$. Consider the following diagram:
			\begin{align}\label{diagram:diag-4}
				\xymatrix{[\mc{U}/\Gamma]\  \ar@{^{(}->}[r] \ar[d]\  &\  [\Malpha/\Gamma]\ \ar@{=}[r] \ar[d] \ &\  \mf{N}^{\boldsymbol{m,\alpha}}_X(r,\delta) \ar[d] \\
					\mc{U}/\Gamma \ \ar@{^{(}->}[r]\  &\  \Malpha/\Gamma \ \ar@{=}[r]\  &\  \Nalpha
				}
			\end{align}
			where each of the top horizontal arrows correspond to an inclusion of open sub-stack, while the bottom
			horizontal arrows correspond to inclusions of open sub-schemes; the
			vertical maps correspond to morphisms to coarse moduli spaces.
			Since taking quotient by the finite group $\Gamma$ is a finite morphism and hence codimension-preserving, it follows from Corollary \ref{cor:codimension-estimate} that the complement of $\mc{U}/\Gamma$ in $\left(\mc{N}^{\boldsymbol{m,\alpha}}_X(r,\delta)\right)^{sm}$ is of codimension at least $3$. For a similar reason, the complement of the open sub-stack $[\mc{U}/\Gamma]$ in $\mf{N}^{\boldsymbol{m,\alpha}}_X(r,\delta)$ is of codimension at least $3$ as well. As $\mf{N}^{\boldsymbol{m,\alpha}}_X(r,\delta)$ is a Deligne-Mumford stack, it follows that
			\begin{equation*}
				\Br\left([\mc{U}/\Gamma]\right)\,\ \simeq\, \ \Br\left(\mf{N}^{\boldsymbol{m,\alpha}}_X(r,\delta)\right)
			\end{equation*}
			(see \cite[Proposition 4.2]{BCD23}). Now, as the action of $\Gamma$ on $\mc{U}$ is free, the left-most
			vertical arrow in the diagram \eqref{diagram:diag-4} is an isomorphism. Hence
			$$\Br\left([\mc{U}/\Gamma]\right)\ \simeq\
			\Br\left(\mc{U}/\Gamma\right).$$
			Also, $\Malpha$ is a smooth variety, and
			hence $\mc{U}$ is smooth. As $\Gamma$ acts freely on $\mc{U}$, the quotient $\mc{U}/\Gamma$ is also
			smooth. The complement of $\mc{U}/\Gamma$ in $\left(\Nalpha\right)^{sm}$
			clearly has codimension at least $3$ as well. Thus we have
			$$\Br\left(\mf{N}^{\boldsymbol{m,\alpha}}_X(r,\delta)\right)\ \simeq \ Br\left([\mc{U}/\Gamma]\right)\ \simeq\
			\Br\left(\mc{U}/\Gamma\right)\ \simeq\  \Br\left(\mc{N}^{\boldsymbol{m,\alpha}}_X(r,\delta)^{sm}\right),$$  
			where the last isomorphism follows from \textnormal{\cite[Theorem 1.1]{C19}}. This
			completes the proof.
		\end{proof}
		
		\section{Brauer group of moduli stack of quasi-parabolic 
			$G$--bundles}\label{section:brauer-group-quasi-parabolic-bundles}
		
		Let $G$ be a simple and simply connected affine algebraic group over $\bb{C}$. Let $X$ be a smooth projective
		curve over $\bb{C}$ of genus $g$, with $g\geq 2$. We aim to compute the Brauer group of the moduli stack of
		quasi-parabolic $G$--bundles on $X$. To be more precise, following notation of \cite{LS97}, let 
		$$S\ =\ \{p_1,\,\cdots,\,p_n\}$$ 
		be the set of parabolic points on $X$, and fix a parabolic subgroup $P_i\, \subset\,G$
		for each $p_i$. Let $\underline{p}$ and $\underline{P}$ denote the sequences $(p_1,\ p_2,\ \cdots,\ p_n)$
		and $(P_1,\ P_2,\ \cdots,\ P_n)$ respectively. For a principal $G$--bundle $E$ on $X$, and each $i\,\in\,[1,\,n]$,
		let $E(G/P_i)\, \longrightarrow\, X$ denote the fiber bundle associated to $E$ for the left-translation
		action of $G$ on $G/P_i$.
		
		\begin{definition}[{\cite[(8.4)]{LS97}}]\label{def:quasi-parabolic-bundle}
			A \textit{quasi-parabolic principal $G$-bundle} of type $\underline{P}$ is a principal $G$-bundle $E$ on
			$X$, together with an element $F_i$ in the fiber $E(G/P_i)_{p_i}$ of $E(G/P_i)$ over $p_i$.
		\end{definition}
		
		\begin{remark}
			Consider the case where $G\,=\,\text{SL}(r,\bb{C})$. A principal $\text{SL}(r,\bb{C})$--bundle corresponds to a 
			vector bundle $E$ of rank $r$ with $\det(E)\,\simeq\,\mc{O}_X$. Parabolic subgroups of 
			$\text{SL}(r,\bb{C})$ correspond to block-upper triangular matrices. So 
			the associated fiber bundle $E\left(G/P_i\right)$ is a flag 
			bundle for $E$ of flag type determined by $P_i$. Choosing of an element $F_i$ in the 
			fiber $E\left(G/P_i\right)_{p_i}$ is equivalent to fixing a filtration of the fiber $E_{_{p_i}}$ whose 
			multiplicities (see Definition \ref{def:parabolic-bundles}) are given by the
			dimensions of the blocks for the subgroup 
			$P_i$ of block-upper triangular matrices. This way, Definition \ref{def:quasi-parabolic-bundle} connects to 
			earlier Definition \ref{def:parabolic-bundles} in the case when $G=\text{SL}(r,\bb{C})$.
		\end{remark}
		
		Let $\mc{M}_G^{par}\left(\underline{p},\ \underline{P}\right)$ denote the moduli stack of quasi-parabolic 
		principal $G$-bundles on $X$ of type $\underline{P}$ (Definition \ref{def:quasi-parabolic-bundle}). There is a 
		very useful result known as the ``uniformization theorem'', which expresses the stack $\mgpar(\underline{p},\ 
		\underline{P})$ as a quotient stack of a certain scheme under the action of a group scheme, which is
		briefly recalled below (see \cite{LS97} for more details).
		
		Consider the ind-Grassmannian $\mc{Q}_G$, which is a direct limit of a sequence of integral projective varieties. Define
		$$\mc{Q}_G^{par} \left(\underline{p},\ \underline{P}\right):= \mc{Q}_G\times \prod_{i=1}^{n}G/{P_i}.$$
		Fix a base-point $p_0\,\in\, X\setminus S$. The ind-group scheme $L_X(G)
		\,:= \,G(\mc{O}(X\setminus \{p_0\}))$ acts on $\mc{Q}_G^{par}\left(\underline{p},\ \underline{P}\right)$ on the
		left. Consider the quotient stack $\left[L_X(G)\backslash\mc{Q}_G^{par}\left(\underline{p},\ 
		\underline{P}\right)\right]$.
		
		\begin{theorem}[{\cite[Theorem 8.5]{LS97}}]\label{thm:uniformization}
			There is a canonical isomorphism of stacks which is locally trivial in the \'etale topology:
			\begin{align}
				\left[L_X(G)\backslash\mc{Q}_G^{par}\left(\underline{p},\ \underline{P}\right)\right]\,\ \simeq\, \ 
				\mgpar\left(\underline{p},\ \underline{P}\right).
			\end{align}
		\end{theorem}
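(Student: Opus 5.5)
The plan is to deduce the parabolic statement from the ordinary uniformization theorem for $G$-bundles, treating the flag data at the points $p_i$ as an extra fibre bundle on both sides. I would proceed in three steps.

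\textbf{Step 1 (non-parabolic uniformization).} First establish $\left[L_X(G)\backslash \mc{Q}_G\right]\simeq \mc{M}_G$, where $\mc{Q}_G=G(\bb{C}((z)))/G(\bb{C}[[z]])$ is the affine Grassmannian at $p_0$ and $z$ is a local coordinate at $p_0$.
\begin{enumerate}[(i)]
\item By Beauville--Laszlo descent, a $T$-point of $\mc{Q}_G$ is the same as a $G$-bundle $E$ on $X\times T$ together with a trivialization of $E$ over $(X\setminus\{p_0\})\times T$. One glues the trivial bundle on the punctured curve with the trivial bundle on the formal disc via the loop.
\item Forgetting the trivialization gives a morphism $\pi:\mc{Q}_G\to\mc{M}_G$. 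Two trivializations differ by a section of $G$ over $(X\setminus\{p_0\})\times T$, that is, by an element of $L_X(G)(T)$. Hence $\pi$ is an $L_X(G)$-torsor onto its image, and it induces a monomorphism $[L_X(G)\backslash\mc{Q}_G]\to\mc{M}_G$.
\item To make this an isomorphism, show $\pi$ is surjective étale-locally. This is the Drinfeld--Simpson theorem: for $G$ semisimple, any $G$-bundle on $X\times T$ becomes trivial on $(X\setminus\{p_0\})\times T'$ after an étale cover $T'\to T$.
\end{enumerate}
I expect step (iii) to be the main obstacle, since it is the only non-formal input. I would prove it by first reducing the structure group to a Borel subgroup étale-locally on $T$. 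Then I would use that a $B$-bundle on an affine curve is an iterated extension of $\mathbb{G}_a$- and $\mathbb{G}_m$-torsors. The $\mathbb{G}_a$-torsors are trivial on affines. The $\mathbb{G}_m$-torsors can be absorbed because $G$ is semisimple, so the degrees can be moved to $p_0$ and killed using the simply connected cover and the connectedness of $G$.

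\textbf{Step 2 (adding the flags).} The forgetful map $\mgpar(\underline{p},\underline{P})\to\mc{M}_G$ is representable. Over a family $E$ on $X\times T$, it is the fibre product $\prod_i E(G/P_i)|_{\{p_i\}\times T}$. Pulling this back along $\pi$, the universal bundle on $X\times\mc{Q}_G$ comes with a canonical trivialization on $(X\setminus\{p_0\})\times\mc{Q}_G$. Since each $p_i\neq p_0$, this restricts to canonical trivializations of $E|_{\{p_i\}\times\mc{Q}_G}$. Therefore
\[
\mc{Q}_G\times_{\mc{M}_G}\mgpar(\underline{p},\underline{P})\ \simeq\ \mc{Q}_G\times\prod_{i=1}^n G/P_i\ =\ \mc{Q}_G^{par}(\underline{p},\underline{P}).
\]

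\textbf{Step 3 (equivariance and conclusion).} Check that under the identification of Step 2, changing the trivialization by $\gamma\in L_X(G)$ acts on the $i$-th factor $G/P_i$ by left multiplication by $\gamma(p_i)$. This is exactly the $L_X(G)$-action on $\mc{Q}_G^{par}$. So the projection $\mc{Q}_G^{par}\to\mgpar$ is again an $L_X(G)$-torsor, being a base change of the torsor from Step 1. It is étale-locally surjective by Step 1(iii), since base change preserves this. This yields the isomorphism
\[
\left[L_X(G)\backslash\mc{Q}_G^{par}(\underline{p},\underline{P})\right]\simeq\mgpar(\underline{p},\underline{P}),
\]
which is locally trivial in the étale topology. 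Canonicity follows because every identification used (Beauville--Laszlo gluing and restriction of trivializations to the points $p_i$) is canonical.
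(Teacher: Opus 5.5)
Your proposal is correct and is essentially the standard argument behind \cite[Theorem 8.5]{LS97}, which the paper cites without proof. That argument is uniformization of $\mc{M}_G$ via Beauville--Laszlo gluing and Drinfeld--Simpson, followed by base change along the representable flag-bundle morphism $\mgpar\to\mc{M}_G$, using that the trivialization on $(X\setminus\{p_0\})\times T$ trivializes the flag bundles at the $p_i$; this is exactly the Cartesian square \eqref{diag:cartesian-diagram-1} that the paper itself uses in Lemma \ref{lem:singular-cohomology}. The only soft spot is your heuristic sketch of Drinfeld--Simpson: line bundles on $X\setminus\{p_0\}$ are in general nontrivial, since $\Pic(X\setminus\{p_0\})\simeq\Pic^0(X)$, so it is the induced $G$-bundle, not the $\bb{G}_m$-torsors, that trivializes. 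Since you invoke the theorem itself rather than this sketch, the argument is unaffected.
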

		
		For convenience, $\mgpar\left(\underline{p},\ \underline{P}\right)$ and $\mc{Q}^{par}_G\left(\underline{p},
		\ \underline{P}\right)$ will be denoted simply by $\mc{M}_G^{par}$ and $\mc{Q}^{par}_G$ respectively.
		Also, denote
		$$F\ \, :=\ \, \prod_{i=1}^{n}G/P_i .$$  
		
		For an algebraic stack $\mathscr{X}$ over $\bb{C}$, let $H_B^*\left(\mathscr{X},\ \bb{Z}\right)$
		denote its singular cohomology. The following lemma describes the singular cohomology of the stack $\mgpar$.
		
		\begin{lemma}\label{lem:singular-cohomology}
			The following holds:
			$$H_B^*\left(\mgpar,\ \bb{Z}\right)\ \ \simeq\ \ H_B^*\left(\mc{M}_G,\ \bb{Z}\right)\otimes
			H_B^*\left(F,\ \bb{Z}\right)
			$$
		\end{lemma}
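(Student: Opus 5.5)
The plan is to realise $\mgpar$ as a fibre bundle over $\mc{M}_G$ with fibre $F$, and then compute its cohomology with the Leray--Hirsch theorem.

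\emph{The fibration.} Forgetting the quasi-parabolic data gives a morphism of stacks
$$\pi\ :\ \mgpar\ \longrightarrow\ \mc{M}_G, \qquad (E,\,\{F_i\})\ \longmapsto\ E .$$
Let $\mc{E}$ be the universal $G$-bundle on $\mc{M}_G\times X$, and let $\mc{E}_{p_i}$ be its restriction to $\mc{M}_G\times\{p_i\}$. Then $\pi$ identifies $\mgpar$ with the fibre product over $\mc{M}_G$ of the associated bundles $\mc{E}_{p_i}(G/P_i)$. So $\pi$ is representable, smooth and projective, and it is an \'etale locally trivial fibre bundle with fibre $F=\prod_i G/P_i$. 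The uniformization Theorem \ref{thm:uniformization} expresses the same fact: the quotients $[L_X(G)\backslash\mc{Q}^{par}_G]\to[L_X(G)\backslash\mc{Q}_G]$ present this fibration, since $\mc{Q}^{par}_G=\mc{Q}_G\times F$ with $L_X(G)$ acting on the factors $G/P_i$ through evaluation at $p_i$. Singular cohomology of the stacks is computed by a simplicial (or Borel-type) presentation. Pulling back along a smooth atlas therefore gives a topological fibre bundle with fibre $F$, and the Leray spectral sequence
$$E_2^{a,b}\;=\;H^a_B\bigl(\mc{M}_G,\,R^b\pi_*\bb{Z}\bigr)\ \Longrightarrow\ H^{a+b}_B\bigl(\mgpar,\,\bb{Z}\bigr)$$
applies.

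\emph{Trivial monodromy.} The structure group of the bundle $\pi$ is (a product of copies of) the connected group $G$, acting on each $G/P_i$. A connected group acts trivially on cohomology, so the local systems $R^b\pi_*\bb{Z}$ are constant with stalk $H^b(F,\bb{Z})$. The cohomology of $F$ is free abelian and concentrated in even degrees, being spanned by Schubert classes through the Bruhat cell decomposition.

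\emph{Leray--Hirsch.} It now suffices to produce global classes on $\mgpar$ that restrict to a $\bb{Z}$-basis of $H^*(F,\bb{Z})$. Each $G/P_i$ is a flag variety, and its integral cohomology is generated over $\bb{Z}$ by Schubert classes. I would construct the global classes by spreading the Schubert classes out. The first way is to take relative Schubert cycles in the flag bundle $\mc{E}_{p_i}(G/P_i)$. This needs a reduction of structure group to a Borel subgroup, which exists after pulling back to the full flag bundle, and that pull-back is injective in cohomology. The second way, which is cleaner, is to use the uniformization: pull the Schubert classes of $G/P_i$ back along $\mc{Q}_G\times F\to G/P_i$ and check that they descend to the quotient by $L_X(G)$. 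Given these classes, Leray--Hirsch yields an isomorphism of $H^*_B(\mc{M}_G,\bb{Z})$-modules
$$H_B^*(\mgpar,\bb{Z})\ \simeq\ H_B^*(\mc{M}_G,\bb{Z})\otimes H_B^*(F,\bb{Z}).$$
Alternatively, the spectral sequence degenerates at $E_2$ for parity reasons, because both $H^*(\mc{M}_G)$ and $H^*(F)$ are torsion-free and even, by Atiyah--Bott. No extension problems arise since all the groups involved are free.

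The main obstacle is the third step. One has to produce integral classes on the stack extending the Schubert basis; this is an integral Leray--Hirsch statement, which is stronger than rational degeneration. If descending through $L_X(G)$ proves awkward, I would fall back on the parity argument. That argument uses the Atiyah--Bott result that $H^*(\mc{M}_G,\bb{Z})$ is torsion-free and concentrated in even degrees for simply connected $G$, so all differentials vanish and the $E_2$ page is free. A filtered free module over the base then splits noncanonically into the stated tensor product.
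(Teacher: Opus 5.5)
Your description of $\pi\colon\mgpar\to\mc{M}_G$ as the fibre product of the flag bundles $\mc{E}_{p_i}(G/P_i)$ is correct, as is the constancy of $R^b\pi_*\mathbb{Z}$. This is the same route the paper takes: it sees the fibration through the Cartesian square with $\mc{Q}^{par}_G=\mc{Q}_G\times F$ and the simplicial resolution, and then invokes Leray--Hirsch. The gap is exactly the step you flag, and none of your remedies closes it in general.

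The parity fallback rests on false input. First, $H^*(\mc{M}_G,\mathbb{Z})$ is not concentrated in even degrees. By Atiyah--Bott, $\mc{M}_G$ has the homotopy type of $\mathrm{Map}(X,BG)$, and slant products of the generator of $H^4(BG,\mathbb{Z})$ with $H_1(X,\mathbb{Z})$ give $H^3(\mc{M}_G,\mathbb{Q})\cong\mathbb{Q}^{2g}$. Second, it is not torsion-free in general. Constant bundles give $c\colon BG\to\mc{M}_G$ with $\mathrm{ev}_{p_1}\circ c=\mathrm{id}_{BG}$, where $\mathrm{ev}_{p_1}(E)=E|_{p_1}$. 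So $H^*(BG,\mathbb{Z})$ is a direct summand of $H^*(\mc{M}_G,\mathbb{Z})$, and it has torsion for $\mathrm{Spin}(n)$ ($n\geq 7$), $G_2$, $F_4$, $E_6$, $E_7$, $E_8$. The torsion-freeness in Atiyah--Bott is for $U(n)$. Consequently $d_3\colon E_3^{a,b}\to E_3^{a+3,b-2}$ is not excluded by parity.

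More seriously, for such $G$ the classes you need do not exist. Take $n=1$ and $P_1=B$. Since $c^*\mc{E}_{p_1}$ is the universal torsor, the base change of $\pi$ along $c$ is $BB\to BG$, with fibre $G/B$. Classes on $\mgpar$ restricting to a $\mathbb{Z}$-basis of $H^*(G/B,\mathbb{Z})$ would pull back to such classes on $BB\simeq BT$. Leray--Hirsch would then make $H^*(BG,\mathbb{Z})\to H^*(BT,\mathbb{Z})$ injective, which is impossible because the target is torsion-free. This is the familiar failure of surjectivity of $H^*(BT,\mathbb{Z})\to H^*(G/B,\mathbb{Z})$ for groups with torsion.

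So your two constructions both fail here. Relative Schubert cycles cannot be pushed down from the full flag bundle: injectivity of that pull-back gives uniqueness, not existence. Descending Schubert classes through $L_X(G)$, which acts on $G/P_i$ by evaluation at $p_i$, is the same existence question and fails for the same reason.

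The paper's proof is silent on this point as well. Its levelwise identification $\mc{Q}^{par}_{G,n}\cong\mc{Q}_{G,n}\times F$ does not commute with the face maps, since $L_X(G)$ moves $F$ through evaluation at the $p_i$.

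What your argument does yield is the following:
\begin{itemize}
\item the statement with $\mathbb{Q}$-coefficients;
\item the integral statement for $G=\mathrm{SL}(r)$, using Chern classes of the tautological subquotient bundles;
\item the integral statement in degrees $\le 3$ for every $G$.
\end{itemize}
For the last item, $H^2(G/P_i,\mathbb{Z})\cong\Pic(G/P_i)$ is spanned by first Chern classes of line bundles attached to characters of $P_i$. These extend over $\mgpar$ via the tautological $P_i$-reduction of $\mc{E}_{p_i}$. This kills the only relevant differential and gives $H^2_B(\mgpar,\mathbb{Z})\cong H^2_B(\mc{M}_G,\mathbb{Z})\oplus H^2_B(F,\mathbb{Z})$, which is all the paper uses afterwards.
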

		
		\begin{proof}
			We have the following Cartesian diagram
			\begin{align}\label{diag:cartesian-diagram-1}
				\xymatrix{
					\mc{Q}_G\times F=\mc{Q}_G^{par} \ar[r] \ar[d] & \mgpar \ar[d]\\
					\mc{Q}_G \ar[r] &  \mc{M}_G
				}
			\end{align}
			where both the horizontal arrows are principal $L_X(G)$--bundles, and the left vertical arrow is the
			first projection. Denote by $\mc{Q}_{G,\bullet}$ the simplicial space of fibered powers of $\mc{Q}_G$ over
			$\mc{M}_G$ (see \cite[p.~21, \S~4]{Te98}); more precisely, for each $n\in\bb{Z}_{\geq 0}$, we have 
			$$\mc{Q}_{G,n}\ :=\ \mc{Q}_G\ \underbrace{\underset{\mc{M}_G}{\times}\ \mc{Q}_G\ \underset{\mc{M}_G}{\times}\ \cdots\ \underset{\mc{M}_G}{\times}}_{n\ times}\ \mc{Q}_G .$$ 
			Using the fact that $\mc{Q}_G\longrightarrow\mc{M}_G$ is a principal $L_X(G)$--bundle, it can be shown that 
			\begin{align}\label{eqn:cohomology}
				H^*_B\left(\mc{M}_G,\ \bb{Z}\right)\ \ \simeq\ \ \bb{H}^*_B\left(\mc{Q}_{G,\bullet},\ \bb{Z}\right);
			\end{align}
			see the proof of \cite[Proposition 4.1]{Te98}.
			Now, if $\mc{Q}_{G,\bullet}^{par}$ analogously denotes the simplicial space of fibered powers of $\mc{Q}_G^{par}$ over $\mgpar$, we observe that for each $n\geq 0$,
			\begin{align*}
				\mc{Q}_{G, n}^{par} &\ =\  \mc{Q}_G^{par}\ \underbrace{\underset{\mgpar}{\times}\ \mc{Q}^{par}_G\ \underset{\mgpar}{\times}\ \cdots\ \underset{\mgpar}{\times}}_{n\ times}\ \mc{Q}^{par}_G \\
				&\ \simeq\ (\mc{Q}_G\ \underbrace{\underset{\mc{M}_G}{\times}\ \mc{Q}_G\ \underset{\mc{M}_G}{\times}\ \cdots\ \underset{\mc{M}_G}{\times}}_{n\ times}\ \mc{Q}_G)\times F \ \ (\text{using the Cartesian diagram \eqref{diag:cartesian-diagram-1}})\\
				&\ \simeq\ \mc{Q}_{G,n}\times F.
			\end{align*}
			In other words, $\mc{Q}_{G,\bullet}^{par}\,\simeq\, \mc{Q}_{G, \bullet}\times F$ as simplicial spaces, and
			consequently by Leray--Hirsch theorem it follows that 
			\begin{align}\label{eqn:eq-2}
				\bb{H}^*_B\left(\mc{Q}_{G,\bullet}^{par},\ \bb{Z}\right)\ \simeq\ \bb{H}_B^*\left(\mc{Q}_{G,\bullet},
				\ \bb{Z}\right)\otimes H^*_B\left(F,\ \bb{Z}\right).
			\end{align}
			Now, since $\mc{Q}_G^{par}\,\longrightarrow\,\mgpar$ is a principal $L_X(G)$--bundle, as in \eqref{eqn:cohomology} we again get 
			$$H^*_B\left(\mgpar, \ \bb{Z}\right)\ \simeq\ \bb{H}^*_B\left(\mc{Q}_{G,\bullet}^{par},\ \bb{Z}\right).$$
			Using \eqref{eqn:eq-2}, we conclude that
			\begin{equation*}
				H^*_B\left(\mgpar, \ \bb{Z}\right)\ \simeq\ \bb{H}_B^*\left(\mc{Q}_{G,\bullet}, \ \bb{Z}\right)\otimes H^*_B\left(F,\ \bb{Z}\right)\ \underset{\eqref{eqn:cohomology}}{\simeq}\ H^*_B\left(\mc{M}_G,\ \bb{Z}\right)\otimes H^*_B\left(F,\ \bb{Z}\right).
			\end{equation*}
			This proves the lemma.
		\end{proof}
		
		The following result helps in comparing the Brauer group of an algebraic stack with certain Betti cohomology
		group, under certain assumptions on the stack.
		
		\begin{proposition}[{\cite[Proposition 2.1]{BH13}}]\label{prop:two-conditions}
			Let $Y$ be an algebraic stack satisfying the following two properties:
			\begin{enumerate}[(1)]
				\item Any class in $H_B^2(Y,\ \bb{Z})$ is represented by a holomorphic line bundle on $Y$.
				\item Each holomorphic line bundle on $Y$ admits an algebraic structure.
			\end{enumerate}
			Then there are isomorphisms
			\begin{equation*}
				\Br\left(Y\right)\ \simeq\ H^2(Y, \ \mc{O}^*_{Y,an})_{torsion}\ \simeq\ H_B^3(Y,\ \bb{Z})_{torsion}.
			\end{equation*}
		\end{proposition}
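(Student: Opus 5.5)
The plan is to compare both Brauer groups with $H^3_B(Y,\bb{Z})_{torsion}$ using two exact sequences: the exponential sequence on the analytification of $Y$, and the Kummer sequences in the étale and analytic topologies. Cohomology of a stack, whether étale, analytic or Betti, will be computed via a smooth atlas $U\to Y$ and the associated simplicial space $U_\bullet$, by cohomological descent, exactly as in the proof of Lemma \ref{lem:singular-cohomology}. All the long exact sequences below then make sense for $Y$, because they hold termwise on $U_\bullet$ and pass to hypercohomology.

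\textbf{Step 1 (analytic side).} The exponential sequence $0\to\bb{Z}\to\mc{O}_{Y,an}\to\mc{O}^*_{Y,an}\to 0$ gives the exact sequence
$$H^1(Y,\mc{O}^*_{Y,an})\xrightarrow{c_1}H^2_B(Y,\bb{Z})\to H^2(Y,\mc{O}_{Y,an})\to H^2(Y,\mc{O}^*_{Y,an})\to H^3_B(Y,\bb{Z})\to H^3(Y,\mc{O}_{Y,an}).$$
By hypothesis (1), $c_1$ is surjective. Hence $V:=H^2(Y,\mc{O}_{Y,an})$ injects into $H^2(Y,\mc{O}^*_{Y,an})$, and there is a short exact sequence $0\to V\to H^2(Y,\mc{O}^*_{Y,an})\to K\to 0$, where $K=\ker(H^3_B(Y,\bb{Z})\to H^3(Y,\mc{O}_{Y,an}))$.

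Since $H^3(Y,\mc{O}_{Y,an})$ is a $\bb{C}$-vector space, it is torsion-free, so every torsion class of $H^3_B(Y,\bb{Z})$ lies in $K$; thus $K_{torsion}=H^3_B(Y,\bb{Z})_{torsion}$. Since $V$ is a $\bb{C}$-vector space, it is torsion-free and divisible. Applying $\mathrm{Tor}(\bb{Z}/n,-)$ (the $n$-torsion functor) to the short exact sequence therefore gives $H^2(Y,\mc{O}^*_{Y,an})[n]\simeq K[n]$ for every $n$, because $V[n]=0$ and $V/nV=0$. Taking the union over $n$ yields the second isomorphism $H^2(Y,\mc{O}^*_{Y,an})_{torsion}\simeq H^3_B(Y,\bb{Z})_{torsion}$.

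\textbf{Step 2 (algebraic versus analytic).} Fix $n$. The Kummer sequences in the étale and analytic topologies give a commutative diagram with exact rows
$$0\to \Pic(Y)\otimes\bb{Z}/n\to H^2_{\text{\'et}}(Y,\mu_n)\to \Br(Y)[n]\to 0,$$
$$0\to \Pic_{an}(Y)\otimes\bb{Z}/n\to H^2_B(Y,\mu_n)\to H^2(Y,\mc{O}^*_{Y,an})[n]\to 0.$$
In the first row $\Br(Y)[n]=H^2_{\text{\'et}}(Y,\bb{G}_m)[n]$, since $\Br$ is defined as the torsion part of $H^2_{\text{\'et}}(Y,\bb{G}_m)$. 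The vertical maps are induced by analytification. The middle vertical map is an isomorphism by Artin's comparison theorem, applied termwise to $U_\bullet$. The left vertical map is surjective by hypothesis (2).

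A diagram chase then shows that the right vertical map is an isomorphism. It is surjective because the middle map is surjective. For injectivity, suppose $c\in\Br(Y)[n]$ maps to $0$, and lift it to $b\in H^2_{\text{\'et}}(Y,\mu_n)$. The image of $b$ in $H^2_B(Y,\mu_n)$ comes from some $a'\in\Pic_{an}(Y)\otimes\bb{Z}/n$. Lift $a'$ to $a\in\Pic(Y)\otimes\bb{Z}/n$ using surjectivity of the left map. Then $b-a$ maps to $0$ in $H^2_B(Y,\mu_n)$, so $b-a=0$ by injectivity of the middle map, and hence $c=0$. Taking the union over $n$ gives $\Br(Y)\simeq H^2(Y,\mc{O}^*_{Y,an})_{torsion}$.

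\textbf{Main obstacle.} The substantive point is foundational rather than computational. One must justify the exponential and Kummer sequences and the étale-to-Betti comparison isomorphism for the stack $Y$, and check that the comparison maps are compatible with the connecting homomorphisms. I would handle all of this uniformly through the simplicial atlas $U_\bullet$. On each $U_k$, a scheme, these facts are classical, and they pass to $Y$ by cohomological descent and functoriality of the hypercohomology spectral sequences. The formal homological algebra in Steps 1 and 2 is then routine.
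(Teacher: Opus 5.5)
Your proof is correct. It takes essentially the same route as the standard argument behind \cite[Proposition 2.1]{BH13}, which the paper quotes without reproving. The Kummer sequences, Artin comparison and hypothesis (2) give $\Br(Y)\simeq H^2(Y,\mc{O}^*_{Y,an})_{torsion}$. The exponential sequence, hypothesis (1) and the fact that $H^i(Y,\mc{O}_{Y,an})$ are $\bb{C}$-vector spaces give the isomorphism with $H^3_B(Y,\bb{Z})_{torsion}$. The foundational work you flag, justifying these sequences and the comparison isomorphism for stacks through a smooth simplicial atlas, is indeed the only non-formal input.
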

		
		\begin{lemma}\label{lem:brauer-group-of-qgpar}
			Let $\mc{O}^{^{an}}_{\mc{Q}^{par}_G}$ denote the sheaf of analytic functions on $\mc{Q}^{par}_G$. Then
			$$\Br\left(\mc{Q}^{par}_G\right)\ \simeq\ H^2\left(\mc{Q}_G^{par},\ 
			(\mc{O}^{^{an}}_{\mc{Q}^{par}_G})^*\right)_{torsion}\ =\ 0.$$
		\end{lemma}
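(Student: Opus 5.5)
The plan is to apply Proposition \ref{prop:two-conditions} to $Y=\mc{Q}^{par}_G=\mc{Q}_G\times F$. This reduces the lemma to the vanishing of $H^3_B(\mc{Q}^{par}_G,\ \bb{Z})_{torsion}$, once the two hypotheses of that proposition are checked.

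The first step is the topology. The ind-Grassmannian $\mc{Q}_G$, the affine Grassmannian of $G$, is an ind-variety with a Schubert (Bruhat) cell decomposition into affine spaces of even complex dimension. The same holds for each $G/P_i$, so $F$ also has such a decomposition. Hence $\mc{Q}_G\times F$ has a CW structure with only even-dimensional cells. Its integral cohomology is therefore free, and it is concentrated in even degrees, being the inverse limit over the finite-dimensional Schubert varieties, where no $\lim^1$ issues arise since the restriction maps are surjective. In particular $H^3_B(\mc{Q}^{par}_G,\bb{Z})=0$. Künneth (no Tor terms, since everything is free) gives
\[
H^2_B(\mc{Q}^{par}_G,\bb{Z})\ \simeq\ H^2_B(\mc{Q}_G,\bb{Z})\oplus\bigoplus_{i=1}^n H^2_B(G/P_i,\bb{Z}),
\]
using $H^1=0$ for both factors.

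The second step checks condition (1) of Proposition \ref{prop:two-conditions}. Since $G$ is simple and simply connected, $H^2_B(\mc{Q}_G,\bb{Z})\simeq\bb{Z}$, and it is generated by $c_1$ of the ample generator $\mc{O}_{\mc{Q}_G}(1)$ of $\Pic(\mc{Q}_G)\simeq\bb{Z}$ (Kumar, Mathieu). For each flag variety $G/P_i$, $H^2_B(G/P_i,\bb{Z})$ is spanned by the first Chern classes of the homogeneous line bundles attached to characters of $P_i$. Pulling these back along the projections realizes every class in $H^2_B$ by an algebraic, hence holomorphic, line bundle.

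The third step checks condition (2): every holomorphic line bundle on $\mc{Q}^{par}_G$ is algebraic. For this I would work on the finite-dimensional pieces $\mc{Q}_{G}^{(k)}\times F$, which are projective varieties, and apply GAGA there. One has to make sure the algebraizations are compatible along the direct system. I would argue this using $H^1(\mc{Q}^{(k)}\times F,\mc{O})=0$, which holds because these are Schubert varieties, rational with rational singularities. Then $\Pic$ injects into $H^2$, and analytic and algebraic line bundles are both classified by $c_1$ compatibly, so the direct system determines an algebraic line bundle on the ind-scheme. Proposition \ref{prop:two-conditions} then gives $\Br(\mc{Q}^{par}_G)\simeq H^2(\mc{Q}^{par}_G,(\mc{O}^{an})^*)_{torsion}\simeq H^3_B(\mc{Q}^{par}_G,\bb{Z})_{torsion}=0$.

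The main obstacle is making the ind-scheme passage rigorous, in both the cohomology computation (inverse limits) and condition (2). If Proposition \ref{prop:two-conditions} is not directly applicable to ind-schemes, the fallback is to prove the vanishing on each $\mc{Q}^{(k)}\times F$ and pass to the limit. On each such projective variety, the exponential sequence with $H^1(\mc{O})=H^2(\mc{O})=0$ gives $H^2(\mc{O}^*_{an})\simeq H^3_B(\bb{Z})=0$. I would then control the limit via Mittag-Leffler, using surjectivity of the Picard restriction maps.
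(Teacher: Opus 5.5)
Your proposal is correct and follows essentially the paper's route: apply Proposition \ref{prop:two-conditions} to $\mc{Q}_G\times F$, get $H^3_B=0$ and the splitting of $H^2_B$ from K\"unneth because neither factor has odd cohomology, and check the two hypotheses factor by factor. The only differences are in the sub-steps. You derive the topology from the Schubert cell decomposition, where the paper cites \cite{BH13} for $H^1(Q_w,\bb{Z})=H^3(Q_w,\bb{Z})=0$; note that the cells are $\bb{C}^k$, so they have even \emph{real} dimension, not even complex dimension. You also verify condition (2) by GAGA on the pieces $Q_w\times F$, where the paper instead uses the splitting $\Pic(\mc{Q}^{par}_G)\simeq\Pic(\mc{Q}_G)\oplus\bigoplus_{i}\Pic(G/P_i)$ obtained from \cite[III Ex. 12.6]{Ha} and \cite{KN97}.
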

		
		\begin{proof}
			By definition, $\mc{Q}_G^{par} \,=\, \mc{Q}_G\times F$, where $F\,:=\, \prod_{i=1}^{n}G/P_i$. It is known that $\mc{Q}_G$ is the direct limit of certain projective varieties $\{Q_w\}$, where $w$ varies
			in the affine Weyl group (see the proof of \cite[Proposition 3.1]{BH13} for details). Thus $\mc{Q}_G^{par}$ is the direct
			limit of the projective varieties $\{Q_w\times F\}$. Also,
			\begin{align*}
				H^1\left(Q_w,\ \bb{Z}\right)\ =\ H^3\left(Q_w,\ \bb{Z}\right)\ \ =\ \ 0
			\end{align*}
			(see the proof of \cite[Proposition 3.1]{BH13}). Now, since $F$ is a product of flag varieties, its singular
			cohomology groups exist only in even degrees. Therefore, the K\"unneth formula gives the following: 
			\begin{align*}
				H^1_B\left(Q_w\times F,\ \bb{Z}\right)\ &=\ H^3_B\left(Q_w\times F,\ \bb{Z}\right)\ =\ 0,\\
				\text{and}\ \ H^2_B\left(Q_w\times F,\ \bb{Z}\right)\ &\simeq \ H^2_B(Q_w,\ \bb{Z})\ \oplus\ H^2_B(F,\ \bb{Z}).
			\end{align*}
			As $\mc{Q}_G^{par}$ is the direct limit of the varieties
			$\{Q_w\times\ F\}$, it follows that
			\begin{align}\label{eqn:cohomology-vanishing}
				H^1_B\left(\mc{Q}_G^{par}, \ \bb{Z}\right)\ &=\ H^3_B\left(\mc{Q}_G^{par}, \ \bb{Z}\right)\ =\ 0,\\
				\text{and}\ \ H^2_B(\mc{Q}_G^{par}, \ \bb{Z})\ &\simeq\  H^2_B(\mc{Q}_G,\ \bb{Z})\ \oplus\ H^2_B(F,\ \bb{Z})\label{eqn:cohomology-of-limit}.
			\end{align}
			Let $\mc{O}_{G/P_i}$ denote the sheaf of regular functions on $G/P_i$. Since each $G/P_i$ is a rational variety, it follows that $H^1\left(G/P_i,\ \mc{O}_{G/P_i}\right)=0$.  Using \text{\cite[III Ex. 12.6]{Ha}} together with \cite[p. 157, Lemma 2.2]{KN97}, it follows that 
			\begin{align}\label{eqn:picard-group-of-limit}
				\Pic\left(\mc{Q}^{par}_G\right)\ =\  \Pic\left(\mc{Q}_G\right)\oplus \bigoplus_{i=1}^{n}\Pic\left(G/P_i\right). 
			\end{align}
			We now aim to invoke Proposition \ref{prop:two-conditions} for the stack $\mc{Q}^{par}_G$. It was argued in the
			proof of \cite[Proposition 3.1]{BH13} that $\mc{Q}_G$ satisfies the two assumptions  of Proposition
			\ref{prop:two-conditions}. The same is true for the variety $F$ as well. Combining \eqref{eqn:cohomology-of-limit}
			and \eqref{eqn:picard-group-of-limit}, we can immediately conclude that $\mc{Q}_G^{par}$ also satisfies the
			two assumptions of Proposition \ref{prop:two-conditions}. Finally, it follows from \eqref{eqn:cohomology-vanishing}
			and the comparison isomorphisms in Proposition \ref{prop:two-conditions} that
			$$\Br\left(\mc{Q}_G^{par}\right)\ \ \simeq\ \ H^2\left(\mc{Q}_G^{par},\ 
			(\mc{O}^{^{an}}_{\mc{Q}^{par}_G})^*\right)_{torsion}\ \ 
			\simeq\ \ H^3_B\left(\mc{Q}^{par}_G,\ \bb{Z}\right)\ \ =\ \ 0.$$ This proves the lemma.
		\end{proof}
		
		\begin{proposition}\label{prop:analytic-brauer-group-vanishing}
			Let $G$ be a simple and simply connected complex affine algebraic group. Let
			$\mc{O}^{^{an}}_{\mgpar}$\ \ denote the sheaf of analytic functions on $\mgpar$. The analytic Brauer group of $\mgpar$ vanishes, namely $$H^2\left(\mgpar,\ (\mc{O}^{^{an}}_{\mgpar})^*\right)_{torsion}\ =\ 0.$$
		\end{proposition}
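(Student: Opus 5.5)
The plan is to apply Proposition \ref{prop:two-conditions} to the stack $\mgpar$ itself. That reduces the vanishing of the analytic Brauer group to the vanishing of $H^3_B(\mgpar,\,\bb{Z})_{torsion}$. The natural input is Lemma \ref{lem:singular-cohomology}, which gives $H^*_B(\mgpar,\,\bb{Z})\simeq H^*_B(\mc{M}_G,\,\bb{Z})\otimes H^*_B(F,\,\bb{Z})$.

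We first record the low-degree cohomology of $\mc{M}_G$ for $G$ simple and simply connected. From \cite{Te98} and the proof of \cite[Proposition 3.1]{BH13}:
\begin{itemize}
\item $H^1_B(\mc{M}_G,\bb{Z})=H^3_B(\mc{M}_G,\bb{Z})=0$;
\item $H^2_B(\mc{M}_G,\bb{Z})\simeq\bb{Z}$, generated by the determinant-of-cohomology (ample generator) line bundle;
\item $H^0_B(\mc{M}_G,\bb{Z})=\bb{Z}$.
\end{itemize}
Alternatively, these can be read off from the simplicial description \eqref{eqn:cohomology} together with the facts $H^1_B(\mc{Q}_G)=H^3_B(\mc{Q}_G)=0$ and the connectedness of $L_X(G)$.

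The flag variety $F$ has torsion-free cohomology concentrated in even degrees. The Künneth/Leray--Hirsch decomposition therefore gives
\[
H^3_B(\mgpar,\bb{Z})\simeq\bigoplus_{i+j=3}H^i_B(\mc{M}_G)\otimes H^j_B(F).
\]
Only odd $i\in\{1,3\}$ contribute, and both of these groups vanish, so $H^3_B(\mgpar,\bb{Z})=0$. By the same decomposition, $H^2_B(\mgpar,\bb{Z})\simeq H^2_B(\mc{M}_G,\bb{Z})\oplus H^2_B(F,\bb{Z})$.

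It remains to check the two hypotheses of Proposition \ref{prop:two-conditions} for $\mgpar$.

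For hypothesis (1), every class in $H^2_B(\mgpar)$ should come from a holomorphic, in fact algebraic, line bundle. The $H^2_B(\mc{M}_G)$ summand is handled by pulling back the determinant line bundle along the forgetful map $\mgpar\to\mc{M}_G$. The $H^2_B(F)=\bigoplus_i\Pic(G/P_i)$ summand is handled by the line bundles on $\mgpar$ obtained from characters of $P_i$ via the universal bundle restricted to $p_i$. Concretely, the universal $G$-bundle restricted to $\mgpar\times\{p_i\}$ carries a reduction to $P_i$, and composing with a character $P_i\to\bb{C}^*$ gives a line bundle. Its pullback to $\mc{Q}^{par}_G=\mc{Q}_G\times F$ is the corresponding homogeneous line bundle on $G/P_i$. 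One must check that these classes map to the right generators under the isomorphism of Lemma \ref{lem:singular-cohomology}. This can be done by restricting to $\mc{Q}^{par}_G$ and using \eqref{eqn:cohomology-of-limit}, since $H^2_B(\mgpar)\to H^2_B(\mc{Q}^{par}_G)$ is injective: $L_X(G)$ is connected, with $H^1=0$ as in \cite{BH13}.

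For hypothesis (2), every holomorphic line bundle on $\mgpar$ should be algebraic. The approach is to pull it back to $\mc{Q}^{par}_G$. There, by \eqref{eqn:picard-group-of-limit} and Lemma \ref{lem:brauer-group-of-qgpar} (and its proof), holomorphic line bundles are algebraic and determined by their Chern classes, since $H^1(\mc{Q}^{par}_G,\mc{O})=0$. An $L_X(G)$-linearization is unique because $L_X(G)$ has no nontrivial characters, as $G$ is simple and simply connected \cite{LS97}. Hence analytic line bundles on $\mgpar$ are classified by $H^2_B$, and the algebraic ones constructed in step (1) exhaust them.

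I expect hypothesis (2) to be the main obstacle. It requires controlling the analytic Picard group of the quotient stack via descent along $L_X(G)$, that is, showing that analytic $L_X(G)$-linearizations are unique and algebraic. The cleanest route is to mimic the argument of \cite[Proposition 3.1]{BH13} for $\mc{M}_G$ verbatim, with $\mc{Q}_G$ replaced by $\mc{Q}_G\times F$. Once both hypotheses hold, Proposition \ref{prop:two-conditions} gives $H^2(\mgpar,(\mc{O}^{an})^*)_{torsion}\simeq H^3_B(\mgpar,\bb{Z})_{torsion}=0$.
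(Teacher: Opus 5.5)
\textbf{The gap.} Your reduction breaks at the claim $H^3_B(\mc{M}_G,\bb{Z})=0$, which is false.

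\begin{itemize}
\item Topologically, $\mc{M}_G$ is the classifying space of the gauge group. By Atiyah--Bott, slant products of the degree-$4$ generator of $H^4(BG,\bb{Z})\cong\bb{Z}$ with $H_1(X,\bb{Z})$ give $H^3_B(\mc{M}_G,\bb{Q})\cong H^1(X,\bb{Q})\cong\bb{Q}^{2g}\neq 0$ for every simple simply connected $G$.
\item Your alternative derivation from \eqref{eqn:cohomology} fails for the same reason. Since $H^{odd}_B(\mc{Q}_G,\bb{Z})=0$, these classes must come from the base $BL_X(G)$ of the fibration $\mc{Q}_G\to\mc{M}_G\to BL_X(G)$. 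But \cite[Proposition 3.4]{BH13} only gives $H^1(BL_X(G),\bb{Z})=0$ and $H^2(BL_X(G),\bb{C}^*)=0$; in fact $H^3_B(BL_X(G),\bb{Z})\neq 0$.
\item Consequently Lemma \ref{lem:singular-cohomology} gives $H^3_B(\mgpar,\bb{Z})\cong H^3_B(\mc{M}_G,\bb{Z})\neq 0$.
\end{itemize}
So your step ``$H^3_B(\mgpar,\bb{Z})=0$'' is wrong. You have not supplied what Proposition \ref{prop:two-conditions} actually needs, namely that this group is torsion-free.

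\textbf{Repair and comparison.} Your route can be repaired. Since $\mc{M}_G$ is simply connected, $H^3_B(\mc{M}_G,\bb{Z})_{torsion}\cong\mathrm{Ext}(H_2(\mc{M}_G,\bb{Z}),\bb{Z})$, and $H_2=\pi_2(\mc{M}_G)\cong\pi_3(G)\cong\bb{Z}$. Alternatively, combine the vanishing $\Br(\mc{M}_G)=0$ of \cite{BH13} with condition (1) for $\mc{M}_G$ \cite[Proposition 5.1]{Te98}. Either way $H^3_B(\mgpar,\bb{Z})$ is torsion-free, and the comparison then finishes the proof.

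For this analytic statement only hypothesis (1) of Proposition \ref{prop:two-conditions} is needed. If $c_1$ is onto $H^2_B$, the exponential sequence makes $H^2(\mc{O}^{an})\to H^2((\mc{O}^{an})^*)$ injective with uniquely divisible image. Hence $H^2((\mc{O}^{an})^*)_{torsion}\cong H^3_B(\,\cdot\,,\bb{Z})_{torsion}$. Hypothesis (2), which you expected to be the main obstacle, matters only for the algebraic group in Theorem \ref{thm:brauer-group-vanishing}.

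The paper argues differently and never computes $H^3_B(\mgpar)$. It descends along the $L_X(G)$-torsor $\mc{Q}^{par}_G\to\mgpar$ with $\mc{O}^*$ coefficients. It uses $H^2(BL_X(G),\bb{C}^*)=0$ and $H^1(BL_X(G),\bb{Z}^t)=0$ to embed $H^2(\mgpar,(\mc{O}^{an})^*)$ into $H^2(\mc{Q}^{par}_G,(\mc{O}^{an})^*)$, whose torsion vanishes by Lemma \ref{lem:brauer-group-of-qgpar}. The two hypotheses are checked only afterwards, for the algebraic statement.
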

		
		\begin{proof}
			Let $\mc{O}^{^{an}}_{\mc{Q}_G^{par}}$ denote the sheaf of analytic functions on $\mc{Q}^{par}_G$. Let
			$BL_X(G)$ denote the classifying space for $L_X(G)$. Using the fact that the quotient morphism $\mc{Q}_G^{par}\longrightarrow \mgpar$ is a principal $L_X(G)$--bundle, the Leray spectral sequence gives rise to the following long exact sequence in analytic topology (cf. \cite[(4.1)]{BH13}):
			\begin{equation}\label{eqn:long-exact-sequence}
				\cdots\ \longrightarrow\ H^2\left(BL_X(G),\  \bb{C}^*\right)\ \longrightarrow
			\end{equation}
			$$
			\ker\left[H^2\left(\mgpar, \
			(\mc{O}^{^{an}}_{\mgpar})^*\right)\ \longrightarrow\
			H^0\left(BL_X(G),\ H^2\left(\mc{Q}_G^{par},\ (\mc{O}^{^{an}}_{\mc{Q}_G^{par}})^*\right)\right)\right]
			$$
			$$
			\ \longrightarrow\ H^1\left(BL_X(G),\ H^1\left(\mc{Q}^{par}_G,\ (\mc{O}^{^{an}}_{\mc{Q}_G^{par}})^*\right)\right).
			$$
			In \eqref{eqn:long-exact-sequence}, we have $H^2\left(BL_X(G),\ \bb{C}^*\right) \ =\ 0$  \cite[Proposition 3.4]{BH13}. Let us compute the last term in \eqref{eqn:long-exact-sequence}. 
			Each $\Pic(G/P_i)$ is a free abelian group of finite rank, and the same is true for the group
			$\Pic(\mc{Q}_G)$ \cite[Lemma 1.4]{BLS98}. Using these, \eqref{eqn:picard-group-of-limit}
			implies that $$H^1\left(\mc{Q}^{par}_G,\ (\mc{O}^{^{an}}_{\mc{Q}_G^{par}})^*\right)\ =\ \bb{Z}^t$$ 
			for some positive integer $t$. Since $H^1(BL_X(G), \ \bb{Z})  =\ 0$ \cite[Proposition 3.4]{BH13}, it follows that
			\begin{equation*}
				H^1\left(BL_X(G),\ H^1\left(\mc{Q}^{par}_G,\ (\mc{O}^{^{an}}_{\mc{Q}_G^{par}})^*\right)\right)\ =\ H^1\left(BL_X(G),\ \bb{Z}^t\right)\ = \ 0.
			\end{equation*}
			Moreover, as the space $BL_X(G)$ is connected,
			$$H^0\left(BL_X(G),\ H^2\left(\mc{Q}_G^{par},\ (\mc{O}^{^{an}}_{\mc{Q}_G^{par}})^*\right)
			\right)\ \simeq\  H^2\left(\mc{Q}_G^{par},\ (\mc{O}^{^{an}}_{\mc{Q}_G^{par}})^*\right).$$
			Therefore, the exact sequence in \eqref{eqn:long-exact-sequence} implies the following inclusion of abelian groups:
			$$
			H^2\left(\mgpar,\ (\mc{O}^{^{an}}_{\mgpar})^*\right)\ \hookrightarrow\  H^2\left(\mc{Q}_G^{par}, \
			(\mc{O}^{^{an}}_{\mc{Q}_G^{par}})^*\right),
			$$
			which implies that
			$$
			H^2\left(\mgpar,\ (\mc{O}^{^{an}}_{\mgpar})^*\right)_{torsion}\ 
			\hookrightarrow\  H^2\left(\mc{Q}_G^{par}, \ (\mc{O}^{^{an}}_{\mc{Q}_G^{par}})^*\right)_{torsion}.
			$$
			Now Lemma \ref{lem:brauer-group-of-qgpar} completes the proof.
		\end{proof}
		
		Finally, we prove that the algebraic Brauer group of $\mgpar$ vanishes, by comparing it with the analytic Brauer 
		group of $\mgpar$.
		
		\begin{theorem}\label{thm:brauer-group-vanishing}
			Let $G$ be a simple and simply connected complex affine algebraic group.
			Then $$\Br\left(\mgpar\right)\ \,=\ \,0.$$
		\end{theorem}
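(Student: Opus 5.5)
The plan is to apply Proposition \ref{prop:two-conditions} to the stack $\mgpar$ itself. This will identify $\Br(\mgpar)$ with $H^2(\mgpar,\,(\mc{O}^{^{an}}_{\mgpar})^*)_{torsion}$, which vanishes by Proposition \ref{prop:analytic-brauer-group-vanishing}; alternatively, it identifies $\Br(\mgpar)$ with $H^3_B(\mgpar,\,\bb{Z})_{torsion}$, which we can also check vanishes directly. So the work consists in verifying the two hypotheses of Proposition \ref{prop:two-conditions} for $\mgpar$.

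\emph{Topology.} By Lemma \ref{lem:singular-cohomology}, $H^*_B(\mgpar,\bb{Z})\simeq H^*_B(\mc{M}_G,\bb{Z})\otimes H^*_B(F,\bb{Z})$. For $G$ simple and simply connected, $\mc{M}_G$ is connected, and by \cite{Te98} and \cite{BH13} we have $H^1_B(\mc{M}_G,\bb{Z})=H^3_B(\mc{M}_G,\bb{Z})=0$ and $H^2_B(\mc{M}_G,\bb{Z})\simeq\bb{Z}$. The flag variety $F$ has free cohomology concentrated in even degrees. Hence
$$H^2_B(\mgpar,\bb{Z})\ \simeq\ H^2_B(\mc{M}_G,\bb{Z})\oplus H^2_B(F,\bb{Z})\ \simeq\ \bb{Z}\oplus\bigoplus_{i=1}^n\Pic(G/P_i),$$
and $H^3_B(\mgpar,\bb{Z})=0$.

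\emph{Line bundles.} For hypothesis (1), we realize each generator of $H^2_B$ by an algebraic line bundle. The $\mc{M}_G$-summand comes from the pull-back of the determinant (generator) line bundle under the forgetful map $\mgpar\to\mc{M}_G$. A class $\chi\in\Pic(G/P_i)$ corresponds to a character of $P_i$, and the line bundle associated to the reduction $E_{p_i}\to E(G/P_i)_{p_i}$ of $E_{p_i}$ to $P_i$ gives a line bundle on $\mgpar$. Under the uniformization of Theorem \ref{thm:uniformization}, its pull-back to $\mc{Q}^{par}_G=\mc{Q}_G\times F$ is the pull-back of the corresponding bundle on $G/P_i$. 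Comparing with \eqref{eqn:cohomology-of-limit}, these first Chern classes span $H^2_B$, which gives (1).

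For hypothesis (2), we use the argument of Proposition \ref{prop:analytic-brauer-group-vanishing} in degree one: the Leray sequence for $\mc{Q}^{par}_G\to\mgpar$, together with $H^1(BL_X(G),\bb{C}^*)=0$ (since $L_X(G)$ is connected with no characters; \cite[Proposition 3.4]{BH13}), shows that $H^1(\mgpar,(\mc{O}^{an})^*)\hookrightarrow H^1(\mc{Q}^{par}_G,(\mc{O}^{an})^*)^{L_X(G)}$. The target equals $\Pic(\mc{Q}^{par}_G)$ by \eqref{eqn:picard-group-of-limit} and GAGA on each $Q_w\times F$. An analytic line bundle on $\mgpar$ is therefore determined by its image, a class in $\Pic(\mc{Q}_G)\oplus\bigoplus_i\Pic(G/P_i)$. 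Each such class is already realized by the algebraic bundles constructed above, and injectivity then gives (2).

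With both hypotheses verified, Proposition \ref{prop:two-conditions} yields $\Br(\mgpar)\simeq H^3_B(\mgpar,\bb{Z})_{torsion}=0$, consistent with Proposition \ref{prop:analytic-brauer-group-vanishing}. The main obstacle is hypothesis (2): carrying out the injectivity and the matching of the $L_X(G)$-linearizations carefully. The point needing the most care is that the equivariant structure on a line bundle on $\mc{Q}^{par}_G$ is unique, because $L_X(G)$ has no nontrivial characters. If that becomes delicate, I would instead cite the corresponding argument in \cite[Proposition 3.1, \S4]{BH13} for $\mc{M}_G$ and adapt it factor by factor using the product structure $\mc{Q}_G\times F$.
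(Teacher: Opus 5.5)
Your proposal follows the paper's proof. You verify both hypotheses of Proposition~\ref{prop:two-conditions} for $\mgpar$ using Lemma~\ref{lem:singular-cohomology} and the line bundles coming from $\mc{M}_G$ and from characters of the $P_i$, which is what \eqref{eqn:eq-3} packages, and then conclude with Proposition~\ref{prop:analytic-brauer-group-vanishing}. Your degree-one Leray argument for hypothesis (2) actually supplies the justification that the paper only asserts from \eqref{eqn:eq-3}.

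Two corrections, neither of which affects the main route:

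First, $H^3_B(\mc{M}_G,\bb{Z})$ is not zero. It is free of rank $2g$, coming from $H^1(X,\bb{Z})\otimes H^4(BG,\bb{Z})$ in the Atiyah--Bott description. So your ``direct'' check should say that $H^3_B(\mgpar,\bb{Z})\simeq H^3_B(\mc{M}_G,\bb{Z})$ is torsion-free, not that it vanishes. Torsion-freeness is all that Proposition~\ref{prop:two-conditions} needs.

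Second, in (1), the Chern classes spanning $H^2_B(\mc{Q}^{par}_G,\bb{Z})$ after pull-back does not by itself give spanning in $H^2_B(\mgpar,\bb{Z})$. You also need to observe that the pull-back $H^2_B(\mgpar,\bb{Z})\to H^2_B(\mc{Q}^{par}_G,\bb{Z})$ is then a surjection between free abelian groups of the same rank. It is therefore an isomorphism. This uses that the generator of $\Pic(\mc{M}_G)$ pulls back to the generator of $\Pic(\mc{Q}_G)$.
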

		
		\begin{proof}
			We have $H^2\left(\mgpar, \ (\mc{O}^{^{an}}_{\mgpar})^*\right)_{torsion}\ =\ 0$ by
			Proposition \ref{prop:analytic-brauer-group-vanishing}. We would now like to compare this group with
			the corresponding algebraic Brauer group. This will be done by showing that the two conditions in
			Proposition \ref{prop:two-conditions} are satisfied for the stack $\mgpar$.
			
			\begin{enumerate}[(1)]
				\item Using the facts that $H^1_B\left(F,\ \bb{Z}\right)\ =\ 0$ as well as that $F$ and $\mc{M}_G$ are
				connected (since $G$ is simply connected), Lemma \ref{lem:singular-cohomology} implies that
				\begin{align}
					H^2_B\left(\mgpar, \ \bb{Z}\right)\ & \simeq\ \left( H^2_B\left(\mc{M}_G,\ \bb{Z}\right)\otimes
					H^0_B\left(F,\ \bb{Z}\right)\right)\ \bigoplus\  \left(H^0_B\left(\mgpar, \ \bb{Z}\right)\otimes H^2_B\left(F,\ \bb{Z}\right)\right)\nonumber\\
					\ &\simeq\ H^2_B\left(\mc{M}_G,\ \bb{Z}\right)\ \bigoplus\ H^2_B\left(F, \ \bb{Z}\right)\label{eqn:eq-1}.
				\end{align}
				Moreover, by \cite[Theorem 1.1 and Proposition 8.7]{LS97} we have
				\begin{align}\label{eqn:eq-3}
					\Pic\left(\mgpar\right)\ \simeq\ \Pic\left(\mc{M}_G\right)\ \oplus\ \Pic(F).
				\end{align}
				Now, by \eqref{eqn:eq-1} and \eqref{eqn:eq-3}, the first Chern class map $\Pic(\mgpar)\,\longrightarrow\,
				H^2_B\left(\mgpar, \ \bb{Z}\right)$ splits as a direct sum of the Chern class maps $\Pic\left(\mc{M}_G\right)
				\,\longrightarrow\, H_B^2\left(\mc{M}_G,\ \bb{Z}\right)$ and $\Pic\left(F\right)\,\longrightarrow
				\,H^2_B\left(F,\ \bb{Z}\right)$. As both these maps are surjective
				(cf. \cite[Proposition 5.1 and Remark 5.2]{Te98}), we conclude  that the Chern class map $\Pic(\mgpar)\,
				\longrightarrow\, H^2_B\left(\mgpar, \ \bb{Z}\right)$ is also surjective; in other words, the first condition in Proposition \ref{prop:two-conditions} is satisfied for $\mgpar$.
				
				\item As $\Pic\left(\mgpar\right)\ \simeq\ \Pic\left(\mc{M}_G\right)\ \oplus\ \Pic(F)$ by \eqref{eqn:eq-3}, it is clear that the second condition of  Proposition \ref{prop:two-conditions} is satisfied for $\mgpar$ as well. 
			\end{enumerate}
			Thus, from the comparison isomorphisms in Proposition \ref{prop:two-conditions} together with Proposition \ref{prop:analytic-brauer-group-vanishing}, it follows that 
			\begin{align*}
				\Br\left(\mgpar\right)\ \simeq\ H^2\left(\mgpar, \ (\mc{O}^{^{an}}_{\mgpar})^*\right)_{torsion}\ =\ 0.
			\end{align*}
			This completes the proof.
		\end{proof}


\begin{thebibliography}{AAAA} 
			
			
			%
			\bibitem[BB]{BB23} I. Biswas and U. N. Bhosle, Brauer and Picard groups of moduli spaces of parabolic vector 
			bundles on a real curve, \textit{Comm. Algebra} \textbf{51} (2023), 3952--3964.
			
			\bibitem[BCD1]{BCD23} I. Biswas, S. Chakraborty and A. Dey, Brauer group of moduli stack of stable parabolic 
			$PGL(r)$-- bundles over a curve, \textit{Int. Jour. Math.} \textbf{35} No. 1 (2023).
			
			\bibitem[BCD2]{BCD24} I. Biswas, S. Chakraborty and A. Dey, Brauer group of moduli of parabolic symplectic 
			bundles, \textit{preprint}.
			
			\bibitem[BD]{BD11} I. Biswas and A. Dey, Brauer group of a moduli space of parabolic 
			vector bundles over a curve, \textit{J. K-theory} {\bf 8} (2011), 437--449.
			
			\bibitem[BHog]{BH10} I. Biswas and A. Hogadi, Brauer group of moduli spaces of
			$\text{PGL}(r)$-bundles over a curve, \textit{Adv. Math.} {\bf 225} (2010), 2317--2331.
			
			\bibitem[BHol]{BH13} I. Biswas and Y. I. Holla, Brauer group of moduli of principal bundles
			over a curve, \textit{Jour. reine angew. Math.} \textbf{677} (2013), 225--249.
			
			\bibitem[BM]{BM19} I. Biswas and F. Machu, On the direct images of parabolic vector 
			bundles and parabolic connections, \textit{J. Geom. Phys.} {\bf 135} (2019), 219--234.
			
			\bibitem[BLS]{BLS98} A. Beauville, Y. Laszlo and C. Sorger, The Picard group of the moduli of $G$-bundles on a 
			curve, \textit{Compos. Math.} \textbf{112} (1998), 183--216.
			
			\bibitem[BY]{BY99} H. Boden and K. Yokogawa, Rationality of the moduli space of Parabolic bundles, \textit{Jour. 
				Lond. Math. Soc.} {\bf 59} (1999), 461--478.
			
			\bibitem[Bu]{B04} A. S. Buch, Quantum cohomology of partial flag manifolds, \textit{Trans. Amer. Math. Soc.} 
			\textbf{357} (2004), 443--458.
			
			\bibitem[Ce]{C19} K. $\check{C}$esnavi$\check{c}$ius, Purity for the Brauer group, \textit{Duke Math. Jour.} {\bf 
				168} (2019), 1461--1486.
			
			
			\bibitem[Ha]{Ha} R. Hartshorne, {\it Algebraic geometry}, Graduate Texts in Mathematics, No. 52. Springer-Verlag,
			New York-Heidelberg, 1977.
			
			\bibitem[KN]{KN97} S. Kumar and M. S. Narasimhan, Picard group of the moduli spaces of $G$-bundles, 
			\textit{Math. Ann.} \textbf{308} (1997), 155--173.
			
			\bibitem[LS]{LS97} Y. Laszlo and C. Sorger, The line bundles on the moduli of parabolic $G$--bundles over curves 
			and their sections, \textit{Annales scientifiques de l'\'E.N.S.} 4e s\'erie, \textbf{30} (1997), 
			499--525.
			
			\bibitem[MS]{MS80} V. B. Mehta and C. S. Seshadri, Moduli of vector bundles on curves
			with parabolic structure, \textit{Math. Ann.} {\bf 248} (1980), 205--239.	
			
			\bibitem[Se]{Ses} C. S. Seshadri, {\it Fibr\'es vectoriels sur les courbes alg\'briques}, Notes 
			written by J.-M. Drezet from a course at the \'Ecole Normale Sup\'erieure, June 1980, 
			Ast\'erisque, 96. Soci\'et\'e Math\'ematique de France, Paris, 1982.
			{\it Ast\'erisque}, no. 96 (1982).
			
			\bibitem[Te]{Te98} C. Teleman, Borel--Weil--Bott theory on the moduli stack of $G$-bundles over a curve, 
			\textit{Invent. Math.} \textbf{134} (1998), 1--57.
			
			
		\end{thebibliography}
	\end{document}